\DeclareMathAlphabet{\mathpzc}{OT1}{pzc}{m}{it}
\def\N{\mathbb N}
\def\R{\mathbb R}
\newtheorem{theorem}{Theorem}
\newtheorem{proposition}{Proposition}[section]
\newtheorem{lemma}{Lemma}
\newtheorem{remark}{Remark}
\newcommand{\vare}{\varepsilon}
\title[Transfer of energy from a flexural to a torsional mode]{Transfer of energy from   flexural to   torsional modes for the Fish-bone Suspension bridge}
\author {Clelia  Marchionna}  \address{Dipartimento di
Matematica del Politecnico, Piazza Leonardo da Vinci 32, 20133
Milano, ITALY}
\author{Stefano Panizzi} \address{Dipartimento di Scienze
Matematiche, Fisiche  e Informatiche, Parco Area delle Scienze 53/A, 43126 Parma, ITALY}
\begin{document}
%\maketitle
\begin{abstract} We consider a conservative  coupled oscillators systems  which arises as a   simplified model of the  interaction of flexural and torsional modes of vibration along the deck  of the so-called fish-bone \cite{BG} model of suspension bridges.
The elastic response of the cables is supposed to be asymptotically linear under traction, and asymptotically constant when compressed (a generalization of the slackening regime).  We show that for vibrations of sufficiently large amplitude, transfer of energy  from  flexural modes  to torsional modes may occur provided a certain condition on the parameters is satisfied.
The main result is a non-trivial extension of a theorem in \cite{MP} to the case when the frequencies of the normal modes are no more supposed to be the same. Several  numerical computations of   instability diagrams for    various  slackening models respecting our assumptions are presented.  \end{abstract}

\maketitle
\noindent {\em Keywords}: Suspension bridges, torsional instability, Hill equation,  instability tongues, coupled oscillators
\

\noindent {\em Mathematics Subject Classification}:   Primary:   74K30; Secondary: 34C15, 34C25, 34B30, 35G61, 35Q74 \vspace{0.2cm} \\

\section{Introduction}
%An important issue in the mathematical modeling is the energy transfer between different modes of vibration in absence of external forces: the so called internal resonance
In this  paper, which  is a completion of a previous work \cite{MP}, we face the question of energy  exchange  between  torsional and longitudinal modes in a generalized
energy conserving suspension bridge model, the so-called \emph{fish-bone} model, proposed by K.S. MOORE  \cite{Moo},
in which  the  elastic response of the cables is   asymptotically linear under traction, and asymptotically constant when compressed (a generalization of the slackening regime).

We follow the line of research carried out by F. GAZZOLA and coworkers in a series of papers \cite{AG,BG, Gaz} according to which  internal nonlinear resonances   may occur even when the aeroelastic coupling is disregarded.
To analyze the onset of resonances, the PDEs system (infinite dimensional system) is reduced to a coupled system  of nonlinear ODEs,
 by projecting the infinite dimensional phase space on a two dimensional subspace through an approximate  Galerkin technique.
The ODEs system is then a simplified model of the  interaction of flexural and torsional modes of vibration along the deck of the bridge.

%:
The same problem was addressed in a previous paper \cite{MP}, in which only the interaction between the modes corresponding to the fundamental flexural and torsional frequencies was considered.
In the present work, we extend the study to the more general case in which modes of different frequencies can interact.

The main result is  Theorem \ref{main} in which we prove  that for deflections  of sufficiently large amplitude, pure flexural and periodic vibrations are unstable,  provided a certain condition on the structural parameters of the bridge is satisfied, so that transfer of energy  from  flexural modes  to torsional modes may occur
even when no external forces are applied.  

% It is worth noting that,   the instability condition can only be satisfied by odd $j$....

%This result  is a non-trivial  extension to the case when the frequencies of the normal modes are no more supposed to be the same, of the instability condition of pure flexural motion at large energy   established in  \cite{MP}.  

 Our technique follows the lines of previous literature:  we consider the differential of the Poincaré map relative 
to the torsional component of the system, around a periodic purely flexural solution. As is well-known this leads to   a Hill equation; then,  as  the energy of the system tends to infinity,  we compute the limit  equation, which turns out to be simple enough  to allow the analysis of its stability via the classical Floquet theory. 

If the transverse and torsional modes
are labeled by $j$ and $k$ respectively, an unexpected property holds when $j$ is even: the instability condition at large deflections of Theorem \ref{main}     is never satisfied, as the limit system is an  uncoupled  system of linear, constant coefficients oscillators. 
Consequently,   every solution of the limit system is periodic  thus stable, so that    purely flexural solutions are asymptotically linearly stable. 

% This obviously does not mean that there are not unstable pure flexural solution for  (\ref{ODE}) even for large $q$, but we can expect a narrowing of instability regions as
% $\, q \rightarrow \infty$.

The extension of  the result in \cite{MP} is non-trivial because  2 main steps are involved: the quite subtle derivation of the $C^1$-regularity of the projected system under the relaxed regularity assumption ($\bf S_0$) on the slackening function; the computation of the limit Hill equation  at large energies, which requires several technicalities.

In order to obtain a more general picture of the stability of the flexural component, in Section 3 we  look at  the Hill equation relative  to the torsional component as dependent on two parameters. The first natural parameter is the maximum elongation of the purely flexural solution, which in Theorem 1 tends to  $ \infty$; we note that the dependence on such parameter is nonlinear. The choice of the second parameter is motivated by  mathematical  reasons, as the equation naturally presents a spectral parameter which, from the structural point of view, essentially corresponds to the torsional behavior of the bridge.  In this way, we can compare the results related to our problem with the instability diagrams  in the literature   for  classical two-parameters Hill equations. 
In particular,  we consider a slackening model that has minimal regularity with respect to our requests, using both academic and real parameters corresponding to Tacoma narrow bridge; we numerically draw  the corresponding    instability diagrams, pointing out which properties are preserved and which diverge from the classical ones; some interesting mathematical aspects are highlighted, such as the presence of resonance pockets typical of some two parameters Hill equations, {\em e.g.} the multi-step Meissner equation.

%As is common in the study Hill equations 
%A common procedure  for studying in detail the instability of the null solution of a Hill Equation is to introduce a pair of parameters, and discuss the instability in the plan of the parameters by studying the so-called instability tongues. For our problem, the first natural parameter is the maximum elongation of the purely flexural solution, which in Theorem 1 tends to + \ infty;  

%A common technique for studying in detail the instability of the null solution of a Hill Equation is to introduce a pair of parameters, and discuss the instability in the plan of the parameters by studying the so-called instability tongues. For our problem,  To compare the results related to our problem with those present in the literature, from a mathematical point of view it is natural to choose the spectral parameter of the Hill equation as the second parameter.

%In Section 3 we present the numerical computation of  several instability diagrams  of   various  slackening models respecting our assumptions. We use both academic and real parameters corresponding to  Tacoma  narrow bridge. Some interesting mathematical aspects are highlighted, such as the presence of resonance  pockets typical of some two parameters Hill equations, {\em e.g.} the multi-step Meissner equation.
The remainder of this section is devoted to the presentation of the PDEs model,  and  its  reduction to the ODEs system. In Section 2 we provide the proof of the main instability result. 
The work ends with three appendices: in Appendix A we prove the regularity of the ODEs system;  the computation  of the limit system at large energies is presented in
Appendix B; finally, Appendix C provides the explicit computation of the instability discriminant for a piecewise linear system leading to a
multi-step Meissner equation.

\subsection{The bridge model}
 We briefly recall the PDEs system. The dynamics   of the midline of the deck, modeled as an  Euler-Bernoulli  beam of length $L$ and width $2l$, is
 coupled with the  elastic response of the suspension cables acting  on the side ends of the deck.
 The  cross section of the deck is assumed to be a rigid rod   with  mass density $\rho$, length $2l$ and  thickness small   with respect  to $l$;
$Y(x,t)$  is the vertical downward deflection  of the midline of the deck with respect to the unloaded state,
$\Theta(x,t)$  is the angle of rotation of the deck with respect to the horizontal position.
The corresponding  PDEs system is given by,
\begin{equation}
\label {PDE1}
\begin{cases}
Y_{tt} + \frac{EI}{\rho S} \,Y_{xxxx} + f(Y+l\sin \Theta) +  f(Y -l\sin \Theta) =0, \\[1ex]
    \Theta_{tt} - \frac{G K }{\rho J} \Theta_{xx} + \, \frac{Sl}{J} \cos \Theta \left[f(Y+l\sin \Theta) -f(Y -l\sin \Theta) \right] =0,
\end{cases}
\end{equation}
complemented with hinged boundary conditions:
\begin{equation}
\label{hinged} Y(0,t)=Y(L,t) = Y_{xx}(0,t)= Y_{xx}(L,t)=0, \quad \Theta(0,t) = \Theta(L,t)=0.
\end{equation}

The other constant parameters are:  $S$  the cross section area, $I$  the
planar second moment of area with respect to the plane $Y=0$, $J$  the polar second moment of area with respect to the $x$-axis  and $E$ and $G$  respectively the  Young  modulus and the shear modulus, K the torsional constant.

The restoring force  $f$ exerted by the hangers   is  applied to both  extremities of the deck  whose displacements from the unloaded state are given by $Y \pm l\sin \Theta$.  No external forces, except gravity, are taken in account.

 In the classical slackening regime,  the hangers  behave as   linear springs of elastic constant ${\rm k}>0$  if stretched and do not exert restoring force if compressed.
A first model in which the system (\ref{PDE1}) acts as a linear, non coupled system for small displacements,  was proposed by K.S MOORE and P.J. McKENNA  \cite{MCKW,Moo}
(hereforth MMK model) assuming for  $f$ the following expression ($g$ is the gravity),
\begin{equation}
\label{slack}
f(r) = m  \left[ (r+r_0)^+ - r_0 \right], \quad r_0 = \rho Sg/2{\rm k}, \quad m = \frac{{\rm k}}{\rho S}.
\end{equation}

Subsequently many other forms  for $f$  have been proposed
 in \cite{BG, MP,  MP2, MCKT}, some of these are nonlinear and smooth  in a neighborhood of the origin, making instability feasible even at  low energies.
 Two  significant examples
 are ($h$ is a positive constant):
 \begin{equation}
\label{fradice}
f(r)=m r +\sqrt{(mr)^2+h^2}\, \,-h,
\end{equation}
\begin{equation}
\label{fexp}
f(r)=h(e^{mr/h}-1),
\end{equation}

 Throughout the paper  we  assume that the  function $f$ satisfies  the following  mild regularity condition:
  \begin{center} \emph{ Assumption} ($\bf S_0$) \end{center}
\begin{itemize}
  \item[a)] $f$\emph{ is  continuous, strictly increasing, and    $f(0)=0$};
  \item[b)] $f$ \emph{ is piecewise $C^1$}, that is \emph{ its derivative is continuous with the exception of a finite
  (eventually empty) set  of points $r_1 < r_2 <...<r_n$ {\textbf { not including zero}} in which   there exist the finite limits: }
  \begin{equation*}
   \lim_{r\rightarrow r_i^{\pm}}f^\prime(r);
   \end{equation*}
 \item[c)] $m:= f^\prime(0)>0$.
\end{itemize}

 In most cases the elastic response of the cables is supposed to be asymptotically linear under traction, and asymptotically constant when compressed, thus it is natural to assume at least one of  the following conditions:
\begin{itemize}
  \item[($ \bf S_1$)]  $\lim_{r\rightarrow -\infty}f^{\prime}(r) = 0 $
  \item[($ \bf S_2$)]  $ M:= \lim_{x\rightarrow +\infty}f^{\prime}(r) >0.$
  \end{itemize}

We note that both  (\ref{slack}) and  (\ref{fradice})  satisfy $(\bf S_0)$ - $(\bf S_1)$ - $(\bf S_2)$,
unlike the function  (\ref{fexp}) which does not satisfies condition $(\bf S_2)$.

The  problem  (\ref{PDE1})-(\ref{hinged})  is well-posed in  appropriate Sobolev spaces  \cite{BG,F}, and enjoys
 two  properties mostly relevant for our purposes:  the total energy is conserved over time; it admits  \emph{pure flexural solutions}, that is motions in which the cross sections of the deck remain horizontal at all times  so that  no torsional vibrations   occur.

\subsection{The ODEs system}

For  small torsional angles, it is very convenient to  replace the system (\ref{PDE1}) with a   pre-linearized one, see \cite{BG,MP}.
By the  usual approximation: $\sin \Theta \sim \Theta$, $\cos \Theta \sim 1$, and by setting $Z=l\Theta$, the system (\ref{PDE1}) reduces to \begin{equation}
\label {PDE2}
\begin{cases}
 \,Y_{tt} + \frac{EI}{ \rho S} \,Y_{xxxx} +f(Y+Z) +  f(Y-Z) =0 \\[1ex]
  \,  Z_{tt} - \frac{G K}{ \rho J} \, Z_{xx} + \, \frac{l^2S}{J} \left[f(Y+Z) -f(Y -Z) \right] =0,
\end{cases}
\end{equation}
and as far as  the scope of this paper is concerned, nothing changes  starting from the system (\ref{PDE1}) or  from  (\ref{PDE2}).

Our \emph{ansatz} is that, after a suitable rescaling of the space variable, the displacements can be  reasonably well approximated by the $j-k$ mode of vibration, that is,
\begin{equation*}
Y(x,t) \simeq y_j(t) \sin (jx), \quad Z(x,t) \simeq z_k (t) \sin (k x),\quad 0 \leq x \leq \pi.
\end{equation*}

Then, through a Galerkin projection,  the PDEs system reduces to a coupled oscillators system which,  dropping  the indexes $j-k$, reads as follows:
\begin{equation}\label{ODE}
\begin{cases}
\ddot{y}   + \alpha j^4 y  +\psi_1(y,z)  = 0 \\[1ex]
\ddot{z}   +  \beta k^2 z   + \gamma\psi_2(y,z)    = 0,
\end{cases}
\end{equation}
with structural parameters, $\alpha=  EI\pi^4/ \rho S L^4$, $\beta= GK \pi^2/ \rho JL^2$, $\gamma = l^2S/J$, and   nonlinear coupling terms,
\begin{align}
&\psi_1(y,z) =
\frac{2}{\pi} \int_0^{\pi} \left[f(y\sin(j x)+z\sin(kx)) + f(y\sin(j x)-z \sin (kx))\right] \sin (j x) \, dx, \label{psi1} \\
&\psi_2(y,z) =
 \frac{2}{\pi} \int_0^{\pi} \, \left[f(y\sin (j x)+z\sin(kx)) - f(y\sin(j x)-z \sin(kx))\right] \sin(kx) \, dx. \label{psi2}
\end{align}

If we   define
 \begin{equation*}
 \Psi(y,z) \, = \, \frac{2}{\pi} \int_0^{\pi} \left [ \mathcal{F}(y\sin(j x)+z \sin(k x)) + \mathcal{F}(y\sin(j x)-z \sin(k x))\right]  \, dx,
  \end{equation*}
 where $\mathcal{F}(r) = \int_0^r f(s) \, ds$, we have
  $  \psi_1  \, = \,  \partial \Psi/\partial y $, $ \psi_2  \, = \,   \partial \Psi/\partial z $, so that
the system  (\ref{ODE})    admits
 a conserved   energy,
  \begin{equation}
\label{energia}
  \mathcal{E}(y,\dot{y},z, \dot{z}) \, = \, \frac{\dot{y}^2}{2} + \frac{\dot{z}^2}{2 \gamma}  + \frac{\alpha j^4}{2} y^2 + \frac{\beta k^2}{2\gamma}z^2 + \Psi(y,z).
  \end{equation}

Note that, under Assumption  ($\bf S_0$), $\Psi(y,z)$ is  nonnegative. As a consequence all solutions of (\ref{ODE}) are global and bounded.

Since $\psi_2 (y,0) \equiv 0$, the system (\ref{ODE}) admits periodic pure flexural solutions, that is  solutions of the form $y=u(t)$, $z\equiv0$ with $u(t)$  periodic.
We consider such solutions as parametrized  by the initial displacement, and we define  $u= u(t;q)$ as the solution of  the initial value problem,
\begin{equation}
\label{eqq}
\ddot{u}  + \alpha j^4 u  + 2 f_j(u ) = 0 \qquad u(0)=q, \quad \dot u (0) =0.
\end{equation}
where
\begin{equation}
\label{ftildej}
f_j(r) :=\frac{1}{2}\, \psi_1(r,0)=\frac{2}{\pi} \int_0^{\pi}f(r \sin jx) \sin jx \, dx.
\end{equation}

Assuming  for the moment that $f\in C^1$, the linearization at a fixed energy level (iso-energetic linearization) of the system  around the periodic orbit $(u(\cdot, q), 0)$ yields, for the torsional component, the Hill equation  (see {\em e.g.} \cite{CW,MP} for details),
\begin{equation}
\label{zjk}
\ddot{v}   +\left( \beta k^2   +  2  \gamma  g_{j,k}(u(t;q))  \right )v    = 0,
\end{equation}
in which we have set,
\begin{equation}
\label{gjk}
g_{j,k}(r) =\frac{1}{2}\,  \frac{\partial  \psi_2}{\partial z}(r,0)=\frac{2}{\pi} \int_0^{\pi}f^\prime(r \sin jx) \sin^2 kx \ dx.
\end{equation}

The problem we want to address is the stability of   solutions of  the Hill equation (\ref{zjk}).
It is worth noting that  in the case  $j=k$, we have $g_{j,j}(r)=f^\prime_j(r)$, and   the linearized system (\ref{eqq})-(\ref{zjk})    is the same as the one studied  in \cite{MP}. This is no longer true if $j\neq k$.
In \cite{MP}, under the assumptions $\bf(S_0)$ - $\bf(S_1)$ - $\bf(S_2)$, we established
 a condition depending on a set of 3 parameters under which the flexural motions are unstable provided the energy  parameter $q$ is sufficiently large.
The next section is devoted to the main result of the present paper which is a non-trivial  extension of the result in \cite{MP} to the case $j\neq k$.

\section{Instability of pure flexural $j-k$ modes at high energies}
\label{high energies}

The stability analysis of (\ref{zjk}) is carried out by means of Floquet's theorem, see \cite{E, MW}.
We  recall the  definition of the stability discriminant  $\Delta= \Delta(q)$ of the Hill equation.
Let $\tau(q)$ be the period of the solution $u$ of the problem (\ref{eqq}), and let    $v_0(t)$,  $v_1(t)$ be  the solutions of (\ref{zjk}) corresponding to initial data
$$v_0(0)=1,\, \dot v_0(0)=0,\quad v_1(0)=0,\, \dot v_1(0)=1;$$
then
%\begin {equation}
%\label{Delta}
$$
\Delta(q)=v_0(\tau (q))+v'_1(\tau (q)).
$$
%\end{equation}

 If  $|\Delta|>2$ the non-trivial solutions of the Hill  equation are unbounded, if  $|\Delta|<2$ are all bounded. In the case
when $\Delta=2$ there exists at least a non trivial $\tau (q)$-periodic solution, when  $\Delta=- 2$ there exists at least a non-trivial $2\tau (q)$-periodic solution.

The main result of this section consists in the  the computation  of
$$ \Delta_{\infty} :=  \lim_{q \rightarrow \infty} \Delta (q), $$
in the case when the elastic response of the cables is asymptotically linear, {\em i.e} under assumptions $\bf(S_1)$ - $\bf(S_2)$.
Referring to the characterization of instability $|\Delta_\infty|>2$, we can establish  a condition, depending on $j$, $k$, and the structural parameters, for which there is instability for sufficiently high energies.
The proof    mimics that of   Theorem 1 in \cite{MP}.
First  we are able to compute the limit system of (\ref{eqq})-(\ref{zjk})
 as  $q$ goes to $+\infty$; it turns out  that the  Hill  equation of the limit system is  a  two-step potential (Meissner equation) that  can be integrated explicitly;  the condition (\ref{c1}) expresses the instability condition for $q$ sufficiently large through the discriminant $\Delta_\infty$ of the limit equation.

%An unexpected property holds when $j$ is even: the limit system  is an  uncoupled  system of linear, constant coefficients oscillators.
%Then every solution of the limit system is periodic  thus stable.
% This obviously does not mean that there are not unstable pure flexural solution for  (\ref{ODE}) even for large $q$, but we can expect a narrowing of instability regions as
% $\, q \rightarrow \infty$.
 %The question will be taken up again in the numerical simulations in Section 3.

%%%%%%%%%%%%%%  TEOREMA 1 %%%%%%%%%%%%%%%%%%%%%%%%
\begin{theorem}
\label{main}
Assume that the function  $f$ satisfies the conditions  $\bf(S_0)$ - $\bf(S_1)$ - $\bf(S_2)$.

Assume that    $\,j \, $ is odd, and let the constants  $\omega_{\pm}$,   $A_{\pm}$,   $\phi_{\pm}$,   be defined as follows:
$$
\omega_{\pm}^2 = \alpha j^4 +M \, \left(1 \pm \frac{1}{j}\right),
$$
%\begin{equation}
%\label{ABdispari}
$$
A_{\pm}^2=\beta k^2 +\gamma M(1\pm \epsilon_{j,k}) ,  \quad  \epsilon_{j,k} = \frac{1}{j}  - \frac{\tan( \pi k /j )}{k\pi},
$$
%\begin{equation}
%\label{fi}
$$
\phi_{\pm}   =\frac{A_{\pm}  }{\omega_{\pm} }  \,  \pi ,  \qquad a = \frac{A_+}{A_-}.
$$

Then, if  the following condition holds true,
\begin{equation}
\label{c1}
\frac{|\Delta_\infty|}{2}:=\left| \cos \phi_+ \,  \cos \phi_-- \frac{ a + a^{-1} }{2} \, \sin \phi_+ \, \sin \phi_- \,  \right| >1.
\end{equation}
there exists $\, q_0 \,$ such that, if $ \, q>q_0$,   the pure flexural periodic solution $(u(t;q),0)$ of the  (non-linear) system (\ref{ODE})  is unstable.
\end{theorem}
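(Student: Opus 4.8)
The plan is to follow the argument of Theorem~1 in \cite{MP}, the two genuinely new points being the use of the $C^1$-regularity of the projected system under the weaker hypothesis $\bf(S_0)$ (which I would take from Appendix~A) and a more delicate identification of the limit of the Hill equation (\ref{zjk}) as $q\to\infty$ (the content of Appendix~B). First I would record that, by the regularity result of Appendix~A, $\psi_1,\psi_2\in C^1(\R^2)$, so that the flow of (\ref{ODE}) is $C^1$ and (\ref{zjk}) is genuinely the $(z,\dot z)$-block of the iso-energetic variational equation along the periodic orbit $(u(\cdot;q),\dot u(\cdot;q),0,0)$. Along that orbit the variational system splits: the $(y,\dot y)$-block is the variational equation of (\ref{eqq}), whose monodromy carries $1$ as a double Floquet multiplier (the flow direction together with the conserved energy of (\ref{eqq}), both of which are supported on the $(y,\dot y)$-sector on the orbit), and the $(z,\dot z)$-block is (\ref{zjk}), whose multipliers $\mu,\mu^{-1}$ solve $\mu^2-\Delta(q)\mu+1=0$. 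Since the two trivial multipliers are precisely those removed by restricting to an energy level and to a section transverse to the orbit, the differential of the Poincaré map of the full nonlinear system has eigenvalues $\mu,\mu^{-1}$; hence $|\Delta(q)|>2$ puts an eigenvalue off the unit circle and makes $(u(\cdot;q),0)$ unstable (the standard reduction, cf.\ \cite{MP,CW}). It therefore suffices to prove $\Delta(q)\to\Delta_\infty$ with $\Delta_\infty$ as in (\ref{c1}): then $|\Delta_\infty|>2$ forces $|\Delta(q)|>2$ for $q>q_0$ by continuity.

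Next I would identify the limiting flexural orbit. Setting $\bar u_q(t):=q^{-1}u(t;q)$, equation (\ref{eqq}) becomes $\ddot{\bar u}_q+\alpha j^4\bar u_q+2q^{-1}f_j(q\bar u_q)=0$, $\bar u_q(0)=1$, $\dot{\bar u}_q(0)=0$; energy conservation for (\ref{eqq}) gives a bound $|\bar u_q|\le C$ uniform in $t$ and $q$, and since $f'$ is bounded (see below) so is $2q^{-1}f_j(q\bar u_q)$, so $\{\bar u_q\}$ is precompact in $C^1_{\mathrm{loc}}(\R)$. From $\bf(S_1)$-$\bf(S_2)$ one has $f(r)/r\to0$ as $r\to-\infty$ and $f(r)/r\to M$ as $r\to+\infty$; inserting this into (\ref{ftildej}) and splitting $[0,\pi]$ according to the sign of $\sin jx$ — where the assumption that $j$ is odd enters, through the lengths of the positive lobes of $\sin jx$ — I would show that, for $s\neq0$,
\[
\tfrac{2}{q}\,f_j(qs)\longrightarrow\begin{cases}M\bigl(1+\tfrac1j\bigr)s,& s>0,\\ M\bigl(1-\tfrac1j\bigr)s,& s<0,\end{cases}
\]
locally uniformly off $s=0$. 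Consequently $\bar u_q\to\bar u$ in $C^1_{\mathrm{loc}}$, where $\bar u$ is the periodic solution with $\bar u(0)=1$, $\dot{\bar u}(0)=0$ of the two-step linear oscillator $\ddot{\bar u}+\omega_+^2\bar u=0$ on $\{\bar u>0\}$ and $\ddot{\bar u}+\omega_-^2\bar u=0$ on $\{\bar u<0\}$; in one period $\{\bar u>0\}$ and $\{\bar u<0\}$ have total lengths $\pi/\omega_+$ and $\pi/\omega_-$, and (the crossing of $(1,0)$ being transverse) $\tau(q)\to\tau_\infty:=\pi/\omega_++\pi/\omega_-$.

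Then I would pass to the limit in the Hill equation. The derivative $f'$ is bounded on $\R$: it is piecewise continuous with finite one-sided limits at the points of part b) of $\bf(S_0)$, and by $\bf(S_1)$-$\bf(S_2)$ it has limits $0,M$ at $-\infty,+\infty$; hence $g_{j,k}$ in (\ref{gjk}) is bounded. Moreover $f'(r\sin jx)\to M\,\mathbf 1_{\{\sin jx>0\}}$ as $r\to+\infty$ and $\to M\,\mathbf 1_{\{\sin jx<0\}}$ as $r\to-\infty$, so $g_{j,k}(r)\to G_\pm$ with $G_+=\tfrac{2M}{\pi}\int_0^\pi\mathbf 1_{\{\sin jx>0\}}\sin^2 kx\,dx$ and $G_-=M-G_+$. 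The elementary identity
\[
\int_0^\pi \mathbf 1_{\{\sin jx>0\}}\,\sin^2 kx\,dx=\frac{(j+1)\pi}{4j}-\frac{\tan(\pi k/j)}{4k},
\]
proved for $j$ odd by summing a geometric series over the positive lobes of $\sin jx$ and using $e^{\,i(j+1)2\pi k/j}=e^{\,i2\pi k/j}$, then gives exactly $\beta k^2+2\gamma G_\pm=A_\pm^2$. Since $u(t;q)=q\bar u_q(t)\to+\infty$ (resp.\ $-\infty$) for each fixed $t$ with $\bar u(t)>0$ (resp.\ $<0$), while $\{\bar u=0\}$ is finite in a period, dominated convergence (using the bound on $g_{j,k}$) shows that the coefficient $\beta k^2+2\gamma g_{j,k}(u(\cdot;q))$ of (\ref{zjk}) tends in $L^1_{\mathrm{loc}}$ to the two-step coefficient equal to $A_+^2$ on $\{\bar u>0\}$ and $A_-^2$ on $\{\bar u<0\}$; hence the solutions $v_0,v_1$ of (\ref{zjk}), and so $\Delta(q)=v_0(\tau(q))+v_1'(\tau(q))$, converge to the corresponding objects for the Meissner equation $\ddot v+A_\pm^2 v=0$ (pieces of total length $\pi/\omega_\pm$ per period). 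For the latter the monodromy over one period equals $M_+M_-$ with
\[
M_\pm=\begin{pmatrix}\cos\phi_\pm & A_\pm^{-1}\sin\phi_\pm\\ -A_\pm\sin\phi_\pm & \cos\phi_\pm\end{pmatrix},
\]
the order of the two factors and the splitting of the $A_+$-piece into two halves being immaterial for the trace by cyclicity, and a direct multiplication gives $\Delta_\infty=\mathrm{tr}(M_+M_-)=2\cos\phi_+\cos\phi_--(a+a^{-1})\sin\phi_+\sin\phi_-$, which is exactly (\ref{c1}). Together with the first step this would complete the proof.

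I expect the main obstacle to be the convergence claims of the last two steps: since $f$ is only asymptotically linear, the rate of approach to the asymptotes — hence the errors in $2q^{-1}f_j(q\,\cdot)$ and in $g_{j,k}$ — need not be uniform near the zeros of $\bar u$, so the limits must be obtained through compactness and dominated convergence rather than naive estimates, and the whole argument rests on the $C^1$-regularity of $\psi_1,\psi_2$, which is itself delicate under $\bf(S_0)$. The lobe bookkeeping in the trigonometric identity, though elementary, also needs care, and this is where the parity of $j$ is essential: for $j$ even the same computation yields $\omega_+=\omega_-$, $A_+=A_-$, $a=1$, hence $\Delta_\infty=2\cos(\phi_++\phi_-)$ with $|\Delta_\infty|\le2$, so the hypothesis that $j$ is odd cannot be dropped. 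These technical points are precisely what Appendices~A and~B take care of.
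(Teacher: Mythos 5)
Your proposal is correct and follows essentially the same route as the paper: rescaling $u/q$, identifying the limit two-step (Meissner) system with coefficients $\omega_\pm^2$, $A_\pm^2$, passing to the limit in the coefficient of (\ref{zjk}) in $L^1$ together with the period, and reading off $\Delta_\infty$ as the trace of the product of the two rotation-type transition matrices, exactly as in the paper's proof (with the lobe integral done in Appendix B and the regularity of $\psi_1,\psi_2$ in Appendix A). The only differences are presentational — you derive the lobe identity and the compactness/dominated-convergence steps directly where the paper cites \cite{MP}, \cite{HS} and \cite{O} — so no substantive gap remains.
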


From the above expression of $A_{\pm}^2$  it is not obvious that they are positive.  Actually the two quantities   $1 \pm \epsilon_{j,k} $ result as the two possible values ($+$ for $r>0$, and $-$ for $r<0$) of the  positive integral
$ \frac{4}{\pi} \int_0^{\pi} H(r \sin jx) \sin^2 kx \ dx$, see (\ref{AB}), (\ref{jodd}) below, and Appendix B.

\begin{remark}
In   Theorem \ref{main} we have considered only the case of odd $j$   as for  even $j$  the condition (\ref{c1}) is never satisfied. Indeed, thanks to (\ref{jeven})  the limit system is  simply  a decoupled linear system that  we can write explicitly as follows,
\begin{equation*}
\begin{cases}
\ddot{U}_\infty +  (\alpha  j^4 +M)U_\infty  = 0 \\[1ex]
\ddot{v}_\infty   +( \beta k^2 +  \gamma M) v_\infty    = 0.
\end{cases}
\end{equation*}

\end{remark}

\medskip
{\subsection{ Linearized $ j-k$ mode: technical tools}
\label{proof teorema ponte}
First of all we need a  regularity result which is crucial  for  the linearization process  of the system (\ref{ODE}) around a pure flexural solution.
\begin{proposition}
\label{psi}
If the function $f$ satisfies assumption ($\bf{S_0}$), then the functions   $\psi_1$, $\psi_2 $, as defined  in (\ref{psi1})-(\ref{psi2}), are of class $  C^1(\mathbb{R}^2)$.
\end{proposition}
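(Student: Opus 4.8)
The plan is to prove that all four first-order partial derivatives of $\psi_1$ and $\psi_2$ exist, are given by differentiation under the integral sign, and are continuous on $\mathbb{R}^2$; by definition this yields $\psi_1,\psi_2\in C^1(\mathbb{R}^2)$. Of assumption ($\bf S_0$) only the continuity and the piecewise-$C^1$-ness of $f$ will be used. The first, purely one-dimensional, step is to observe that $f$ is \emph{locally Lipschitz}: on any $[-R,R]$ split at the finitely many break points $r_1<\dots<r_n$; on each resulting closed subinterval $f$ is $C^1$ with derivative bounded by its finite one-sided limits, and $f$ is globally continuous, so $f$ is Lipschitz on $[-R,R]$ with constant $L_R:=\sup_{[-R,R]\setminus\{r_1,\dots,r_n\}}|f'|$. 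In particular $f$ is absolutely continuous on bounded intervals, so that $f(b)-f(a)=\int_a^b f'(s)\,ds$ for $a,b$ in such an interval.

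The crucial point — and the step I expect to be the genuine obstacle — is a negligibility statement: for each fixed $(y,z)\in\mathbb{R}^2$ the set
\begin{equation*}
N(y,z):=\{\,x\in[0,\pi]:\ y\sin jx+z\sin kx\in\{r_1,\dots,r_n\}\ \text{ or }\ y\sin jx-z\sin kx\in\{r_1,\dots,r_n\}\,\}
\end{equation*}
is finite, hence Lebesgue-null. Indeed, for each $i$ the function $x\mapsto y\sin jx\pm z\sin kx-r_i$ is a trigonometric polynomial of degree at most $\max(j,k)$, so it either has finitely many zeros in $[0,\pi]$ or vanishes identically; the latter, evaluated at $x=0$, would force $r_i=0$, contradicting $0\notin\{r_1,\dots,r_n\}$. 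Off $N(y,z)$ the function $f$ is of class $C^1$ in a neighborhood of each of the two arguments $y\sin jx\pm z\sin kx$, so $x\mapsto f'(y\sin jx\pm z\sin kx)$ is well defined there and the classical differentiation-under-the-integral-sign argument can be carried out.

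Granting this, the rest is routine dominated-convergence bookkeeping. For the $y$-difference quotient of $\psi_1$ I would write each increment of $f$ as an integral of $f'$ (absolute continuity); for $|h|\le1$ and $R:=|y|+|z|+1$ the resulting integrand is bounded in modulus by $2L_R$, and as $h\to 0$ it converges, for every $x\notin N(y,z)$ hence for a.e.\ $x$, to $\big[f'(y\sin jx+z\sin kx)+f'(y\sin jx-z\sin kx)\big]\sin^2 jx$. Dominated convergence then gives
\begin{equation*}
\frac{\partial\psi_1}{\partial y}(y,z)=\frac{2}{\pi}\int_0^{\pi}\big[f'(y\sin jx+z\sin kx)+f'(y\sin jx-z\sin kx)\big]\sin^2 jx\,dx,
\end{equation*}
and the same computation produces the companion formulas for $\partial\psi_1/\partial z$, $\partial\psi_2/\partial y$ and $\partial\psi_2/\partial z$, with the weight $\sin jx\,\sin kx$ (and the appropriate sign inside the bracket) in place of $\sin^2 jx$.

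Finally I would check that these four functions are continuous. If $(y_\nu,z_\nu)\to(y,z)$, then for every $x\notin N(y,z)$ the arguments $y_\nu\sin jx\pm z_\nu\sin kx$ converge to $y\sin jx\pm z\sin kx$, a point at which $f'$ is continuous; hence the integrands converge for a.e.\ $x$, and, since for large $\nu$ all arguments stay in a fixed compact interval, they are dominated by a constant. A last application of dominated convergence gives convergence of the integrals, so $\partial\psi_1/\partial y$, $\partial\psi_1/\partial z$, $\partial\psi_2/\partial y$, $\partial\psi_2/\partial z$ are continuous on $\mathbb{R}^2$, and therefore $\psi_1,\psi_2\in C^1(\mathbb{R}^2)$.
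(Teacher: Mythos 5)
Your proof is correct, and while the reduction is the same as the paper's, the analytic core is handled by a genuinely different device. Like the paper, you rest everything on two facts: $f$ is locally Lipschitz with $f'$ bounded on compacta away from the finitely many break points, and for each fixed $(y,z)$ the sets $\{x\in[0,\pi]:\ y\sin jx\pm z\sin kx=r_i\}$ are finite because $x\mapsto y\sin jx\pm z\sin kx-r_i$ is analytic and cannot vanish identically, its value at $x=0$ being $-r_i\neq0$; this is exactly where the hypothesis $0\notin\{r_1,\dots,r_n\}$ of $(\mathbf{S_0})$ enters, and your argument here coincides with the paper's. Where you diverge is in justifying the limit interchange: the paper isolates it in Lemma \ref{derivata}, proved by splitting $[0,\pi]$ into an $\varepsilon$-neighborhood of the finitely many bad points (contribution $O(\varepsilon)$ by the Lipschitz bound) and its complement, where the standard $C^1$ rule applies after a compactness/distance argument; you instead apply dominated convergence directly to the difference quotients, and again to get continuity of the derivatives, with the exceptional null set $N(y,z)$ allowed to depend on the base point and the local Lipschitz constant $L_R$ as dominant. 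This is legitimate: the remark in Appendix A that Lemma \ref{derivata} ``cannot be derived from the traditional Lebesgue theorem of differentiation under the integral sign'' targets the classical statement requiring differentiability in the parameter for a.e.\ $x$ throughout a neighborhood (or an MVT-type domination), not a sequential dominated-convergence argument on difference quotients, which works precisely under the hypotheses you verify (the paper's own example $\int_0^1|x-p|\,dx$ also satisfies them). The trade-off: your route is shorter and leans on standard measure theory, while the paper's lemma is self-contained, more elementary in flavor, and reusable as a stand-alone differentiation criterion with the structural condition (finitely many solutions of $u(x,p)=r_0$) made explicit. Two cosmetic points: in the continuity step the integrands $f'(y_\nu\sin jx\pm z_\nu\sin kx)$ are undefined on the finite sets $N(y_\nu,z_\nu)$, which is harmless but worth a word; and the weight in $\partial\psi_2/\partial z$ is $\sin^2 kx$ (with a plus sign inside the bracket), not $\sin jx\,\sin kx$ --- this does not affect the argument.
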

The proof of this Proposition  is given in Appendix A.

Then we  list  some general facts about the functions $ f_j(r)$, $g_{j,k}(r)$ defined in (\ref{ftildej}),(\ref{gjk}).
Given any function $\, f \,$, accordingly with \cite{MP}, we define the  integral transform,
\begin{equation}
\label{defftilde}
\tilde{f}(r) := \frac{2}{\pi} \int_0^{\pi} f(r \sin x) \sin x \, dx.
\end{equation}

We denote by $\, f_e(x) = \frac{1}{2} \left(f(x) + f(-x)\right) $, $\, f_o(x) = \frac{1}{2} \left(f(x) -  f(-x)\right) $  even and odd parts of a function respectively.
We have the following,
\begin{lemma}
\label{lemmaftilde}
 If the function $\,f\,$ satisfies the first two conditions of the assumption ($\bf S_0 $), then $ f_j\in C^1(\mathbb{R})$, $f_j(0)=0$ and $ f'_j \geq 0$.

 Moreover we have,
 \begin{equation}
\label{ftildaj}
 f_j = \tilde{f_o} \quad (\text{even } j), \qquad  f_j  =  \tilde{f_o} + \frac{1}{j}\, \tilde{f_e} \quad (\text{odd } j ).
 \end{equation}
\end{lemma}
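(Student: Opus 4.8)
The plan is to split the argument into two logically independent pieces: an elementary change of variables that yields the decomposition formula \eqref{ftildaj}, and an analytic dominated-convergence argument that upgrades ``piecewise $C^1$'' to ``$C^1$'' for the averaged function, after which $f_j(0)=0$ and $f_j'\ge 0$ drop out for free. For the identity \eqref{ftildaj}, first substitute $t=jx$ in \eqref{ftildej} to get $f_j(r)=\frac{2}{\pi j}\int_0^{j\pi}f(r\sin t)\sin t\,dt$, then break $[0,j\pi]$ into the $j$ intervals $[m\pi,(m+1)\pi]$, $m=0,\dots,j-1$, and on the $m$-th one set $t=m\pi+s$, so that $\sin t=(-1)^m\sin s$. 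Writing $f=f_e+f_o$ and using that $f_e$ is even and $f_o$ is odd, the $m$-th block equals $\int_0^\pi\bigl[(-1)^m f_e(r\sin s)+f_o(r\sin s)\bigr]\sin s\,ds$. Summing over $m$, the $f_o$-part appears $j$ times while the $f_e$-part carries the factor $\sum_{m=0}^{j-1}(-1)^m$, which is $0$ for even $j$ and $1$ for odd $j$; dividing by $j$ and recalling \eqref{defftilde} gives exactly \eqref{ftildaj}.

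For the regularity, the key preliminary remark is that under $(\mathbf{S_0})$ a)--b) a continuous, piecewise-$C^1$ function $g$ is locally Lipschitz and absolutely continuous: on any $[-R,R]$ there is $L_R$ with $|g(a)-g(b)|\le L_R|a-b|$ and $|g'|\le L_R$ off the finite breakpoint set. I would then prove the following claim (which I apply with $g=f$, and which also covers $g=f_e,f_o$): the map $r\mapsto \frac{2}{\pi}\int_0^\pi g(r\sin jx)\sin jx\,dx$ is $C^1(\R)$, with derivative $\frac{2}{\pi}\int_0^\pi g'(r\sin jx)\sin^2 jx\,dx$, the integrand being defined for a.e.\ $x$. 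Differentiability: as $r\to r_0$ the difference quotients $x\mapsto \frac{g(r\sin jx)-g(r_0\sin jx)}{r-r_0}\sin jx$ are dominated by $L_R\sin^2 jx\le L_R$ for $|r|,|r_0|\le R$, and converge pointwise to $g'(r_0\sin jx)\sin^2 jx$ at every $x$ for which $\sin jx\neq 0$ and $r_0\sin jx$ is not a breakpoint, i.e.\ for all but finitely many $x\in[0,\pi]$; at the remaining finitely many $x$ they merely stay bounded, which is harmless for dominated convergence. Continuity of the derivative: fix $r_0$; for all but finitely many $x\in[0,\pi]$ the point $r_0\sin jx$ is not a breakpoint, so $g'$ is continuous there and $g'(r\sin jx)\to g'(r_0\sin jx)$ as $r\to r_0$, and with the uniform bound $|g'(r\sin jx)|\le L_R$ a second application of dominated convergence gives the continuity.

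Granting the claim with $g=f$, we get $f_j\in C^1(\R)$ with $f_j'(r)=\frac{2}{\pi}\int_0^\pi f'(r\sin jx)\sin^2 jx\,dx$; since $f$ is increasing, $f'\ge0$ wherever it exists, hence $f_j'\ge0$, and $f_j(0)=\frac{2}{\pi}\int_0^\pi f(0)\sin jx\,dx=0$ because $f(0)=0$. (The identity \eqref{ftildaj} is of course consistent with this, as $f_e,f_o$ inherit a)--b) of $(\mathbf{S_0})$ and so $\tilde f_e,\tilde f_o\in C^1$ by the claim with $j=1$.) The main obstacle is precisely the regularity claim: the relaxed hypothesis $(\mathbf{S_0})$ only makes $f$ piecewise $C^1$, so both as a function of $x$ (fixed $r$) and as a function of $r$ (fixed $x$) the map $f'(r\sin jx)$ has genuine jump discontinuities, and one cannot differentiate under the integral sign naively. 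What rescues the argument is the combination of the local Lipschitz bound on $f$ (uniform domination of the difference quotients) with the fact that, for each fixed $r$, the set $\{x\in[0,\pi]:r\sin jx\in\{r_1,\dots,r_n\}\}$ is finite, hence $dx$-negligible — so dominated convergence delivers both the existence and the continuity of $f_j'$.
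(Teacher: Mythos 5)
Your proof is correct. The derivation of \eqref{ftildaj} is essentially identical to the paper's: the same substitution $t=jx$, the same splitting of $[0,j\pi]$ into $\pi$-blocks with $\sin(m\pi+s)=(-1)^m\sin s$; you merely make the even/odd decomposition explicit where the paper ``collects the signs''. Where you genuinely diverge is the regularity statement. The paper does not reprove it: it imports $f_j\in C^1$, $f_j'\ge 0$ from the analogous property of $\tilde f$ established in \cite{MP} (condition $(\mathbf{\tilde H})$), and its own self-contained machinery for this kind of issue is the two-variable Lemma \ref{derivata} in Appendix A, which handles the bad set by splitting the integral over an $\vare$-neighbourhood of the finitely many solutions of $u(x,p)=r_0$ and estimating the small piece, rather than by dominated convergence. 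Your argument instead dominates the difference quotients by the local Lipschitz constant and passes to the limit pointwise a.e., using that for each fixed $r$ the set $\{x: r\sin jx\in\{r_1,\dots,r_n\}\}$ is finite; this is a legitimate fix of the issue the paper flags in its Appendix A remarks (the \emph{classical} differentiation theorem fails here because the exceptional $r$-set depends on $x$, but applying dominated convergence directly to the difference quotients at a fixed $r_0$ is fine). What your route buys is a self-contained, shorter one-variable proof that also yields the formula $f_j'(r)=\tfrac{2}{\pi}\int_0^\pi f'(r\sin jx)\sin^2 jx\,dx$ needed for the sign; what the paper's route buys is economy, since Lemma \ref{derivata} is needed anyway for Proposition \ref{psi} and covers $f_j$ as the special case $\psi_1(\cdot,0)$. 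One point worth stating explicitly in your write-up: the finiteness of the bad set when $r=0$ (and at the zeros of $\sin jx$) rests on the hypothesis in ($\bf S_0$) that the breakpoints $r_i$ do not include $0$ --- exactly the assumption the paper emphasizes --- otherwise the dominated-convergence step for continuity of $f_j'$ at $r=0$ would break down.
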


\begin{proof}
The regularity and the sign of  $ \,f'_j\, $ follow  from the analogous  properties of  $ \tilde{f}$ proved in  \cite{MP}, condition  $\mathbf{(\tilde{H})}$.
To  prove (\ref{ftildaj}), we use the elementary identity  $\, \sin(z + h\pi) = (-1)^h \sin z $, to compute
\begin{eqnarray*}
\int_0^{\pi} f(r \sin jx) \sin jx \,dx & = & \frac{1}{j}\int_0^{j\pi} f(r \sin z) \sin z \,dz \\
 & = & \frac{1}{j} \sum_{h=0}^{j-1} (-1)^h \int_0^{\pi} f((-1)^h r\sin z ) \sin z \, dz.
\end{eqnarray*}

By collecting the signs, and by distinguishing the two cases  even or odd $j$, we obtain (\ref{ftildaj}).
\end{proof}

\begin{lemma}
\label{j pari}
If $j$ is even, then    $g_{j,k}(r)$ is an even function for every $k$.  More precisely, we have
%Moreover, only the odd part of $f$ acts. That is, being $f_o(r)=\frac{1}{2}(f(r)-f(-r))$,
%$$f_j(r) = \frac{2}{\pi} \int_0^{\pi} f_o(r \sin x) \sin x \, dx,$$
%\begin{equation}
%\label{gtilde}
$$
 g_{j,k}(r)=\frac{2}{\pi} \int_0^{\pi} f_o'(r \sin jx) \sin^2 kx \, dx.
 $$
%\end{equation}
\end{lemma}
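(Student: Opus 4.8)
The plan is to mimic the symmetry argument of Lemma~\ref{lemmaftilde}, but using the reflection $x\mapsto\pi-x$ of the interval of integration instead of the translation by multiples of $\pi$. As a preliminary I would note that, by Proposition~\ref{psi}, the map $g_{j,k}(r)=\frac12\,\frac{\partial\psi_2}{\partial z}(r,0)$ is well defined and continuous, and that the representation (\ref{gjk}) holds with $f'$ read in the almost-everywhere sense (for fixed $r$ the set of $x\in[0,\pi]$ for which $r\sin jx$ equals one of the finitely many break-points $r_1,\dots,r_n$ is finite, hence negligible in the integral).

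Next I would perform the change of variable $x=\pi-y$, $dx=-dy$, in (\ref{gjk}). For integers $j,k$ one has the elementary identities
$$
\sin j(\pi-y)=-(-1)^j\sin jy,\qquad \sin^2 k(\pi-y)=\sin^2 ky,
$$
so, when $j$ is even, $\sin j(\pi-y)=-\sin jy$, and the substitution yields
$$
g_{j,k}(r)=\frac2\pi\int_0^\pi f'(r\sin jx)\sin^2 kx\,dx=\frac2\pi\int_0^\pi f'(-r\sin jy)\sin^2 ky\,dy=g_{j,k}(-r),
$$
which is precisely the asserted evenness of $g_{j,k}$.

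For the sharper formula I would then average the two identities just obtained,
$$
g_{j,k}(r)=\frac12\bigl(g_{j,k}(r)+g_{j,k}(-r)\bigr)=\frac2\pi\int_0^\pi \frac{f'(r\sin jx)+f'(-r\sin jx)}{2}\,\sin^2 kx\,dx,
$$
and finally recognise the integrand: differentiating $f_o(s)=\frac12\bigl(f(s)-f(-s)\bigr)$ gives $f_o'(s)=\frac12\bigl(f'(s)+f'(-s)\bigr)$ for a.e.\ $s$, so the last integral is exactly $\frac2\pi\int_0^\pi f_o'(r\sin jx)\sin^2 kx\,dx$, as claimed.

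I do not expect any genuine obstacle here. The only delicate point is the low regularity of $f$, i.e.\ making sure that the pointwise manipulations of $f'$ and the identity $\frac12\frac{\partial\psi_2}{\partial z}(r,0)=\frac2\pi\int_0^\pi f'(r\sin jx)\sin^2 kx\,dx$ are legitimate under assumption ($\mathbf{S_0}$) alone; both are guaranteed by Proposition~\ref{psi} together with the finiteness of the break-point set, so the proof reduces to the two lines of trigonometry above.
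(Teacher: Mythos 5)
Your argument is correct and rests on the same symmetry idea as the paper's proof: for even $j$ the quantity $\sin jx$ changes sign under a symmetry of the domain while $\sin^2 kx$ is invariant, which eliminates the even part of $f'$ and leaves exactly $\frac{2}{\pi}\int_0^{\pi} f_o'(r\sin jx)\sin^2 kx\,dx$. The only (cosmetic) difference is that the paper exploits the $\pi$-periodicity of the integrand to pass to $[-\pi/2,\pi/2]$ and reflects about $x=0$, splitting $f'=f_e'+f_o'$ directly, whereas you reflect about $x=\pi/2$, obtain $g_{j,k}(r)=g_{j,k}(-r)$ first and then average using $f_o'(s)=\tfrac12\bigl(f'(s)+f'(-s)\bigr)$; both computations are legitimate under ($\mathbf{S_0}$) for the reasons you indicate.
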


\begin{proof}
If $j$ is even, $f'(r \sin jx) (\sin kx)^2$ has period $\pi$, then

$$ g_{j,k}(r)=\frac{2}{\pi} \int_0^{\pi} f'(r \sin jx) \sin^2 kx \, dx=\frac{2}{\pi} \int_{-\pi/2}^{\pi/2} f'(r \sin jx) \sin^2 kx \, dx.$$
%It is easy to show, with the change of  the integration variable $x'=-x$, that $g_{j,k}(-r)=g_{j,k}(r)$.
It follows that
$$ g_{j,k}(r)=\frac{2}{\pi}\int_{-\pi/2}^{\pi/2} (f_e'(r \sin jx) \sin^2 kx +f_o'(r \sin jx) \sin^2 kx \, )\,dx $$
and the integral of the first term is null since $f_e'$ is odd.
\end{proof}

We make an observation to highlight some differences that may exist between even and odd harmonics of the flexural component.
If  the function $f$ is as in example  (\ref{fradice}), its odd part is linear, precisely we have
$f_o(r)=mr$. Then, if $j$ is even, the equation (\ref{zjk}) reduces to the linear oscillator,
 $$\ddot{ v}     +  ( \beta k^2 + 2  \gamma m) v=0.$$

Since  $\beta k^2 + 2  \gamma m>0$, the $j-k$ modes are always linearly stable. This is no more true for  odd $j$, see formula (\ref{ftildaj}).

%%%%%.

We end this section by introducing the functions which characterize the limit system as $q\rightarrow \infty$   ($(\cdot)^+ = $ positive part, $H(\cdot)$ Heaviside function). We set
\begin{equation}
\label{AB}  h_j(r) =\frac{2M}{\pi} \int_0^{\pi} (r \sin jx)^+ \sin jx \ dx, \qquad s_{j,k}(r) =\frac{2M}{\pi} \int_0^{\pi} H(r \sin jx) \sin^2 kx \ dx,
\end{equation}
then, by setting $r_0=0$  and   $\theta=0$ in Lemma \ref{teorematheta} in Appendix B, we obtain their  explicit expressions:
%\begin{lemma}
%\label{contilim}
%We have
\begin{equation}
\label{jeven}
h_j(r) =\frac{M}{2} r, \quad  s_{j,k}(r)= \frac{M}{2}  \qquad    (\text{even } j);
\end{equation}
\begin{equation}
\label{jodd}
h_j(r) =\frac{M}{2}\hspace{-0.5ex}\left[1+\frac{{\rm sign} (r)}{j} \right] r, \quad  s_{j,k}(r)= \frac{M}{2} \hspace{-0.5ex}\left [1+{\rm sign}(r)\hspace{-0.5ex} \left[\frac{1}{j} -
 \frac{\tan(k\pi/j)}{k\pi} \right]\right] \quad ( \text{odd } j).
\end{equation}

%%%%%%%%%
%
%
\subsection{Proof of Theorem \ref{main}}
\begin{proof}
In   \cite{MP} we proved the same facts for the simple mode with $j=k=1$, therefore here we just outline the main points emphasizing  a few differences.

Let $u$  be the solution  of the problem (\ref{eqq}), and $v$ be  the solution equation (\ref{zjk}) with fixed initial data
$  v (0)=a,\,\dot {v} (0)=b$.

We rescale $u$ by setting $U_q(t)=u (t)/q$,  then the system  becomes
\begin{equation}
\label{sistema_hat2}
\begin{cases}
\ddot{U}_q  + \alpha j^4 U_q +2 \displaystyle{\frac{f_j(qU_q )}{q}} = 0 \\[1ex]
\ddot{v}_q   +(k^2 \beta+2\gamma g_{j,k}(qU_q(t) ) ) v_q    = 0
\end{cases}
\end{equation}
with initial data
$U_q(0)=1,\,\dot{U}_q(0)=0$, $ v_q(0)=a,\,\dot {v}_q(0)=b$.

Then we introduce   the limit system  of (\ref{sistema_hat2}), as $q \rightarrow \infty$,
\begin{equation}
\label{sistema_lim}
\begin{cases}
\ddot{U}_\infty+  \alpha  j^4 U_\infty +2 \displaystyle{h_j(U_\infty)} = 0 \\[1ex]
\ddot{v}_\infty  +( \beta k^2 +2 \gamma  s_{j,k}(U_\infty(t)) v_\infty   = 0
\end{cases}
\end{equation}
with initial data
$ U_\infty(0)=1,\,\dot{U}_\infty(0)=0$, $ v_\infty(0)=a,\,\dot v_\infty(0)=b$.

We claim that, as $q\to +\infty$, the solutions of (\ref{sistema_hat2})-(\ref{sistema_lim}) satisfy the limit relations:
\begin{itemize}
\item[i)] $U_q \to U_\infty \quad $  in $C^0(\R)$;
\item[ii)] $v_q  \to v_\infty \quad $ in $C^1([0,T])$, for every $T>0$;
\item[iii)] $\tau_q \to \tau_\infty, \quad $  where $\tau_q$, $\tau_\infty$ are the periods of $U_q$,   $U_\infty$ respectively.
\end{itemize}
\noindent

Proof of i): Let $\tilde f(r)$  be as in (\ref{defftilde}), then in  \cite{MP}  we proved that
$\lim_{q\to+\infty}\tilde f(qr)/q =\tilde h(r)$,
uniformly on compact sets.
Thanks to   (\ref{ftildaj}), in the same way we obtain
$ \lim_{q\to+\infty} f_j(qr)/q = h_j(r)$, uniformly on compact sets.
Owing to a classical continuous dependence theorem (\cite{HS}, Th. 3, Ch.XV, p.297), we get that $U_q$  converges uniformly on $\R$ to $U_\infty$.

Proof of  ii):  Let us consider  the Hill  equation in (\ref{sistema_lim}). By Lemma 5.1 in \cite{MP} (see also \cite{O},)  we have to prove that $ g_{j,k}(qU_q(t))\to  s_{j,k} (U_\infty(t))$ in $L^1[0,T]$.

In the case when  $\sin jx\neq 0$, we have $U_\infty(t)\neq 0$, thus by the uniform convergence of $U_q$, for $q $ sufficiently large, the sign
of  $\,qU_q \sin jx \,$ is   the same as that of $\,U_\infty \sin jx$. Thanks to  the assumptions ($ {\bf S_1}$)-($ {\bf S_2}$), it follows that
$$\lim_{q\rightarrow + \infty} f'(qU_q(t)\sin jx)= M H(U_\infty(t)\sin jx).$$

Since $U_\infty(t)\neq 0$ almost everywhere in $[0,T]$, and  $s_{j,k}$ is bounded,  we get the required convergence of $ g_{j,k}(qU_q )$ to $ s_{j,k}(U_\infty )$.

 The  proof  of iii)  is the same as in Proposition 5.2 of  \cite{MP}.

Finally, we come to the computation of the limit period $\, \tau_{\infty} \,$ and discriminant $\, \Delta_\infty$.
Recall that $j$ is odd,  then   the limit system  is a coupled (nonlinear) system with step coefficients $h_j(r)$, $ s_{j,k}(r)$ defined by formula (\ref{jodd}).
It turns out that the limit Hill equation  (\ref{sistema_lim})  is a two-step  Meissner equation  that can be integrated explicitly.
First of all we note that the function $U_\infty(t)$ is even, so that we can fix our attention only on the half period. If we divide the interval $[0,\tau/2]$ in two subintervals $I^+=[0,t_0]$, where $U_\infty\geq 0$, and $I^-=[t_0,t_1]$, where  $U_\infty  \leq 0$, we easily obtain
$$t_0=\frac{\pi}{2\omega_+},\qquad t_1=\frac{\pi}{2\omega_-},\qquad \tau=2(t_0+t_1).$$

It follows that the coefficient of the equation (\ref{sistema_lim})  is given by  (here $\mathbbm{1}_{I}(t)$ denotes the indicator function of $I$),
$$ \beta k^2 +2 \gamma  s_{j,k}(U_\infty(t)) = A^2_+\mathbbm{1}_{I^+}(t) + A^2_- \mathbbm{1}_{I^-}(t) \qquad (0\leq t \leq \tau/2).$$

A straightforward computation (see {\em e.g.} \cite{MP} or Appendix C), shows that
$$\frac{\Delta_\infty}{2}=v_\infty(\tau)=\cos \phi_+ \,  \cos \phi_- - \frac{ a + a^{-1} }{2} \, \sin \phi_+ \, \sin \phi_- .$$

By the instability characterization   of Hill equations, if $ |\Delta_\infty| >2$, then the pure flexural solutions $u(t;q)$ are linearly unstable
(thus unstable)
for  sufficiently large $q$.
\end{proof}

%}
{\section{The MMK  model: instability tongues and some numerical results}}
\label{approx al contrario}

A classical problem in  the theory of Hill equations consists in  studying    the parametric resonance of equations of  the type
\begin{equation}
\label{moltiplicativo}
\ddot{v}       +     (\beta+qp(t))) v =0.
\end{equation}

Here $p(t)$ is a   periodic function of  period $\pi$ (just for fixing one), and $\beta$, $q$ are real parameters. The general picture is well known and can be briefly described as follows  \cite[ch. II, Th. 2.1]{MW}  \cite[ch. 2, Th. 2.3.1]{E}.
In the $(q,\beta)$-plane a sequence of separated regions  of instability (instability tongues, Arnold's tongues) emerge from the points $(0,n^2)$, $n=1,2,...$ on the $\beta-$axis which  are  bounded by two curves  $ \beta=  \beta_n^{\pm}(q)$. For values $(q,\beta)$ in the interior of the tongues, i.e  for $\beta^-_n(q) < \beta < \beta^+_n(q)$,
all solutions of (\ref{moltiplicativo}) are unstable, while outside  are stable. At the boundaries, $ \beta=  \beta_n^{\pm}(q)$,  we have
at least a non-trivial $\pi$-periodic or  $\pi$-anti-periodic solution. For some $p(t)$, and for some exceptional values of $q$,  the curves  $\beta^{\pm}(q)$  may intersect
(co-existence of periodic solutions) and form the so-called resonance pockets, as in the case of the square wave case $p(t) = \text{sign } \cos(2t)$.

The classical, and most studied, example  is the  Mathieu equation, $p(t) = \cos(2t)$, whose instability diagram was first drawn by B. VAN DER POL \& M. J. O. STRUTT in \cite{VS} and can be found on many books   \cite{McL,Arn}.
In this case we know the   order of the instability tongues as $q\rightarrow 0$, the asymptotic behavior of their width as $n \rightarrow \infty$, and many other features,
such as the fact that the width of the stability bands shrinks to zero as  $q \rightarrow \infty$, so that    the areas of instability invade the whole plane, see  \cite{Lo,WK}.

Another  case extensively studied is when $p(t)$ is a two-step function in the interval $[0,\pi]$  (Meissner equation). The papers  \cite{H1,SM}  provide
  very detailed results on the existence of resonance pockets   and on the asymptotical behavior of the stability boundaries.
%It is also worth mentioning  the paper \cite{BLS} regarding the large scale radial density of the stable zones.

  In \cite{MP3} we studied  a   generalization of the Mathieu equation,
\begin{equation}
\ddot{v}      +     (\beta + g(u(t;q))) v  =0,  \label{H2}
\end{equation}
 in which   the periodic coefficient $g(u(t;q))$ depends on the solution $u=u(t;q)$ of an  initial-value problem for a conservative second order  differential equation,
 \begin{equation}
\ddot{u}   +   4 u +  f(u )=0 ,  \qquad u(0)=q, \quad \dot{u}(0) =0,    \label{duff1}
\end{equation}
where  $f(r) =O(r^2)$, $r\rightarrow 0$.
Note that the Mathieu equation corresponds to the case  $f=0$ in (\ref{duff1}), and $g(u)=u$ in (\ref{H2}).
The equation (\ref{H2}) is again a Hill equation with two parameters that shares some aspects with the  Mathieu equation. Indeed the main result in \cite{MP3} concerns the order   of tangency at $q=0$ of the instability tongues:
\begin{equation}
\label{q0}
   \beta_n^+(q) -  \beta_n^-(q) =O(q^n), \quad q \rightarrow 0,
\end{equation}
which  is the same as that of the Mathieu equation, at least if $f$ and $g$ are real analytic functions near the origin. However,  two features make its  analysis different (and more difficult): its period   depends on the parameter $q$, as it is half the period of $u(t;q)$ if $g$ is an even function, and   is the same otherwise; the dependence on $q$ in (\ref{H2}) is no more linear. 
%As a result, the instability diagram for large  $q$ can be very different from the case (\ref{moltiplicativo}), as shown in Theorem \ref{main}.

The   system (\ref{eqq})-(\ref{zjk})  has  the same structure of (\ref{duff1})-(\ref{H2}), then for regular $f$, $g$ the asymptotic formula (\ref{q0}) applies.  Having at our disposal a  result for high energies (Theorem \ref{main}) and another one for low energies (formula (\ref{q0})), it would be interesting to have  a general picture at least in the significant case of the MMK restoring force (\ref{slack}), which   is a simple non-trivial model satisfying    the Assumptions  ($\bf S_0$)-($\bf S_1$)-($\bf S_2$), being not smooth in a single  point $r_0$.

We now come to the presentation of the   models adopted in our  numerical simulations. We focus only on the system (\ref{eqq})-(\ref{zjk}) in the case when function  $f$  is the MMK function (\ref{slack}), and on
the system (\ref{eqqbase})-(\ref{zjjbase}) below, which is a non smooth variation of it  in the case $j=k$. We performed our simulations both  with a simple set of parameters with  the same order of magnitude (Figure 1), and
with the   set   as in \cite{F,F2} where   the mechanical parameters of the Tacoma narrow bridge (TNB) and of other significant bridge models are listed,  including the MMK model, and  several numerical experiments are performed.

The analytic expression of the functions $  f_j$, owing to  (\ref{ftildaj}) in Lemma  \ref{lemmaftilde}, is known once we provide the projected form $\tilde{f}$ of $f$ in (\ref{slack}). This was already done in our previous paper \cite{MP}, and its closed form is given by,
\begin{equation}
\label{Mooretilde}
\tilde{f}(r)=mr, \quad r\geq -r_0, \quad
\tilde{f}(r)=-\frac{2 }{\pi}mr_0 \left[ \frac{r}{r_0}{\rm asin} \left( \frac{r_0}{r} \right) +\sqrt{1-\left( \frac{r_0}{r} \right) ^2}\right], \quad r\leq -r_0.
\end{equation}

As a consequence, for $j=k$, also the the functions  $  g_{j,j}=  f'_j$ (see  Lemma \ref{lemmaftilde}) have a simple closed form.
Much more complicated is the derivation of the functions $  g_{j,k}$ in (\ref{gjk}) when $j\neq k$. Their complete computation   is provided  in
Appendix \ref{formula gjk_Moore}.

In order to compare different functions, modeling the  same slackening regime, we introduce only for the case $j=k$ a non smooth variation of $f_j$, and $f'_j$ which
maintains the same shape as the original MMK model but which is adjusted in a way to have  the same asymptotic behavior of $f_j$, that is  $\lim_{r\rightarrow -\infty}\tilde f(r) = -\frac{4}{\pi}mr_0$. More precisely, we replace the function $\tilde f$ in (\ref{Mooretilde}) with  the function
\begin{equation}
\label{Moore1}
\bar f(r)=m\left[(r + 4r_0/\pi)^+ - 4r_0/\pi  \right]
\end{equation}

Then $ f_j$ can be approximated, according to Lemma \ref{lemmaftilde},
 with
%\begin{equation}
%\label{fbarp}
$$
 \bar{f}_j(r) =\frac{1}{2}\,\bar{f}_o(r), \qquad (\text{even } j), \qquad \bar{f}_j(r) =\frac{1}{2}(\bar{f}_o(r) +\frac{1}{j}\bar{f}_e(r) ), \qquad (\text{odd } j),
 $$
% \end{equation}
 and we introduce the system,
 \begin{equation}
\label{eqqbase}
\ddot{u}   +\alpha j^4 u  + 2\bar f_j(u )= 0,\,\,u(0)=q,\,\,\dot{u}(0)=0
\end{equation}
\begin{equation}
\label{zjjbase}
\ddot{v}   + (\beta j^2 +  2 \gamma \bar  f'_j (u ) )v    = 0,
\end{equation}

Both $u $ and $v $ can be  calculated explicitly and  in Appendix \ref{conti Moore base} we provide the   formula for the instability discriminant  $\Delta$ for every $(q,\beta)$.  The   expression of $\Delta (q,\beta)$ is very complicated and is actually of little help from the analytical point of view, but   it can be used to represent very quickly, using Matlab, the instability diagram in order to make a comparison with the numerical results regarding the system (\ref{eqq})-(\ref{zjk}).

Before proceeding with the discussion of the numerical results, let us fix the  starting points $\, \beta_N^+(0) = \beta_N^-(0) $  of the instability tongues for our equations.   Recall that we use $\beta$ as spectral parameter.
\begin{proposition}
\label{tipping}
Let us suppose that the function  $f$ satisfies the assumption ($\bf S_0$).
Then instability tongues  for the Hill equation (\ref{zjk}) stem from the values of $\beta_N(0)$, $N=1,2...$, in the $\beta$-axis, given by
\begin{equation*}
\label{betaponte}
\beta_N(0)=\frac{ \alpha j^4+2m}{4k^2}\,N^2-\frac{2  \gamma m}{k^2}.
 \end{equation*}

If $j$ is even the tongues corresponding to an odd index $N$ disappear, since the actual period is half the period of $u(t;q)$.
\end{proposition}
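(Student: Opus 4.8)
The plan is to evaluate everything along the trivial branch $q=0$ and then read off the feet of the tongues from the classical Floquet picture recalled at the beginning of Section \ref{approx al contrario}. First I would note that for $q=0$ the solution of (\ref{eqq}) is $u(t;0)\equiv 0$, so along this solution the coefficient of the Hill equation (\ref{zjk}) is the constant $\beta k^2+2\gamma\,g_{j,k}(0)$, and a direct computation gives
$$g_{j,k}(0)=\frac{2}{\pi}\int_0^{\pi} f'(0)\,\sin^2 kx\,dx=m,$$
so that at $q=0$ equation (\ref{zjk}) is the harmonic oscillator $\ddot v+\lambda v=0$ with $\lambda:=\beta k^2+2\gamma m$. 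At the same time the period $\tau(q)$ of $u(\cdot;q)$ tends, as $q\to 0^+$, to the harmonic period $\tau_0:=2\pi/\sqrt{\alpha j^4+2m}$: by Lemma \ref{lemmaftilde} one has $f_j\in C^1$, $f_j(0)=0$ and $f_j'(0)=\frac{2}{\pi}\int_0^{\pi} f'(0)\sin^2 jx\,dx=m$, the breakpoints $r_i$ of $f$ being bounded away from the origin by assumption $(\mathbf{S_0})$; hence the equilibrium $u=0$ is a nondegenerate center for (\ref{eqq}) and the period function extends continuously to $q=0$ with value $\tau_0$.

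Next I would compute the discriminant on the trivial branch. Using the fundamental system $v_0(t)=\cos(\sqrt\lambda\,t)$, $v_1(t)=\sin(\sqrt\lambda\,t)/\sqrt\lambda$, so that $v_1'(t)=\cos(\sqrt\lambda\,t)$, in the definition $\Delta(q)=v_0(\tau(q))+v_1'(\tau(q))$ one obtains $\Delta(0)=2\cos(\sqrt\lambda\,\tau_0)$. Hence $|\Delta(0)|<2$ for every $\lambda>0$ with $\sqrt\lambda\,\tau_0\notin\pi\mathbb Z$, while $\Delta(0)=\pm2$ exactly when $\sqrt\lambda\,\tau_0=N\pi$ for some $N\in\mathbb N$; for $\lambda\le 0$ one has instead $\Delta(0)=2\cosh(\sqrt{-\lambda}\,\tau_0)\ge2$. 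Since $\Delta$ depends continuously on $(q,\beta)$ and the instability set $\{|\Delta|>2\}$ meets the axis $q=0$ only in the ray $\lambda<0$ (the unbounded, $N=0$ region), every bounded component — i.e. every instability tongue — can touch the $\beta$-axis only at a point where $\Delta(0)=\pm2$, that is where $\beta k^2+2\gamma m=N^2\pi^2/\tau_0^2=\tfrac14 N^2(\alpha j^4+2m)$, $N=1,2,\dots$. Solving for $\beta$ gives exactly
$$\beta_N(0)=\frac{\alpha j^4+2m}{4k^2}\,N^2-\frac{2\gamma m}{k^2},\qquad N=1,2,\dots$$
That these feet actually open into tongues for $q>0$ is the content of the general two-parameter Hill theory recalled above (\cite{MW,E}, and \cite{MP3} for the present nonlinear dependence on $q$); here only their location on the axis is asserted.

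Finally, for even $j$ I would exploit the extra symmetry. By (\ref{ftildaj}) in Lemma \ref{lemmaftilde}, when $j$ is even one has $f_j=\tilde{f_o}$, an odd function, so the potential of (\ref{eqq}) is even and therefore $u(t+\tau/2;q)=-u(t;q)$; by Lemma \ref{j pari}, $g_{j,k}$ is even, whence the coefficient $\beta k^2+2\gamma\,g_{j,k}(u(t;q))$ is in fact $\tau/2$-periodic. Writing $M(q)$ for the monodromy matrix of (\ref{zjk}) over one true period $\tau/2$ (so $\det M(q)=1$), the discriminant over $\tau$ appearing in the definition satisfies $\Delta(q)=(\mathrm{tr}\,M(q))^2-2$, so the genuine instability tongues are those of $\mathrm{tr}\,M$. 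At $q=0$ one has $\mathrm{tr}\,M(0)=2\cos(\sqrt\lambda\,\tau_0/2)$, which equals $\pm2$ only when $\sqrt\lambda\,\tau_0/2=N\pi$, i.e. at $\beta=\beta_{2N}(0)$; at $\beta_{2N+1}(0)$ instead $\mathrm{tr}\,M(0)=0$, an interior point of a stability interval. Thus for even $j$ only the even-index feet survive and the odd-index tongues disappear, as stated.

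The one genuinely delicate point is the continuity $\tau(q)\to\tau_0$ as $q\to0^+$ used in the first paragraph: although $f$ is merely piecewise $C^1$, assumption $(\mathbf{S_0})$ keeps the breakpoints away from $0$, so $f_j$ is $C^1$ near the origin with $f_j'(0)=m>0$ and the period function of the resulting nondegenerate center is continuous up to $q=0$; granted that, the rest is the routine constant-coefficient Floquet computation together with the parity facts already established in Lemmas \ref{lemmaftilde} and \ref{j pari}.
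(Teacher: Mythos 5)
Your argument is correct and follows essentially the same route the paper sketches (the paper skips the detailed proof): you translate the spectral parameter using $g_{j,k}(0)=m$, use $\tau(q)\to\tau_0=2\pi/\sqrt{\alpha j^4+2m}$ as $q\to0^+$, and locate the feet of the tongues from the constant-coefficient discriminant $2\cos(\sqrt{\lambda}\,\tau_0)$, while the even-$j$ statement comes from the evenness of $g_{j,k}$ (Lemma \ref{j pari}) together with the symmetry $u(t+\tau/2;q)=-u(t;q)$, which halves the period of the Hill coefficient. The only difference is that you fill in the details the paper leaves to the reader (continuity of the period function at $q=0$ under $(\bf S_0)$, and the identity $\Delta=(\mathrm{tr}\,M)^2-2$ over the half period), which is done correctly.
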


We skip the proof which, after  a translation of the spectral parameter $\beta$, follows from  the fact that, if
$\tau(q)$ is the period of $u(t;q)$, then its limit as $q \rightarrow 0$, is given by
$ \tau_0= 2\pi/\sqrt{\alpha j^4 +2m} $.
The different behavior for  even or odd  $j$    arises from the fact that for even $j$, $ g_{j,k}(u(t))$ is an even function and the minimal period of the equation  (\ref{zjk}) is half the period $\tau(q)$ of $u(t;q)$.

\begin{figure}[h]
   \begin{minipage}[b]{0.5\linewidth}
    \centering
    \includegraphics[width=.8\linewidth]{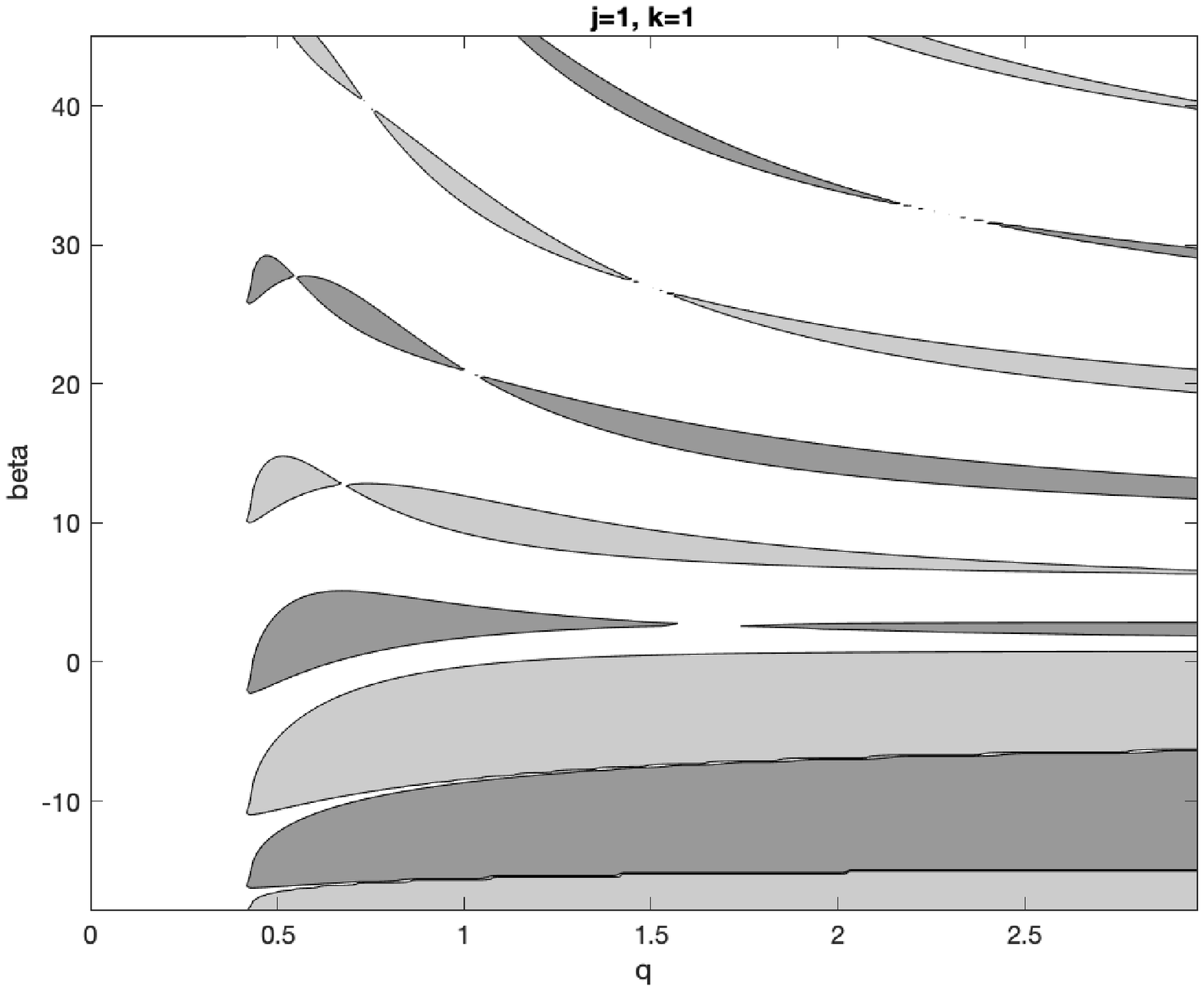}
  \end{minipage}%%
  \begin{minipage}[b]{.5\linewidth}
    \centering
    \includegraphics[width=.8\linewidth]{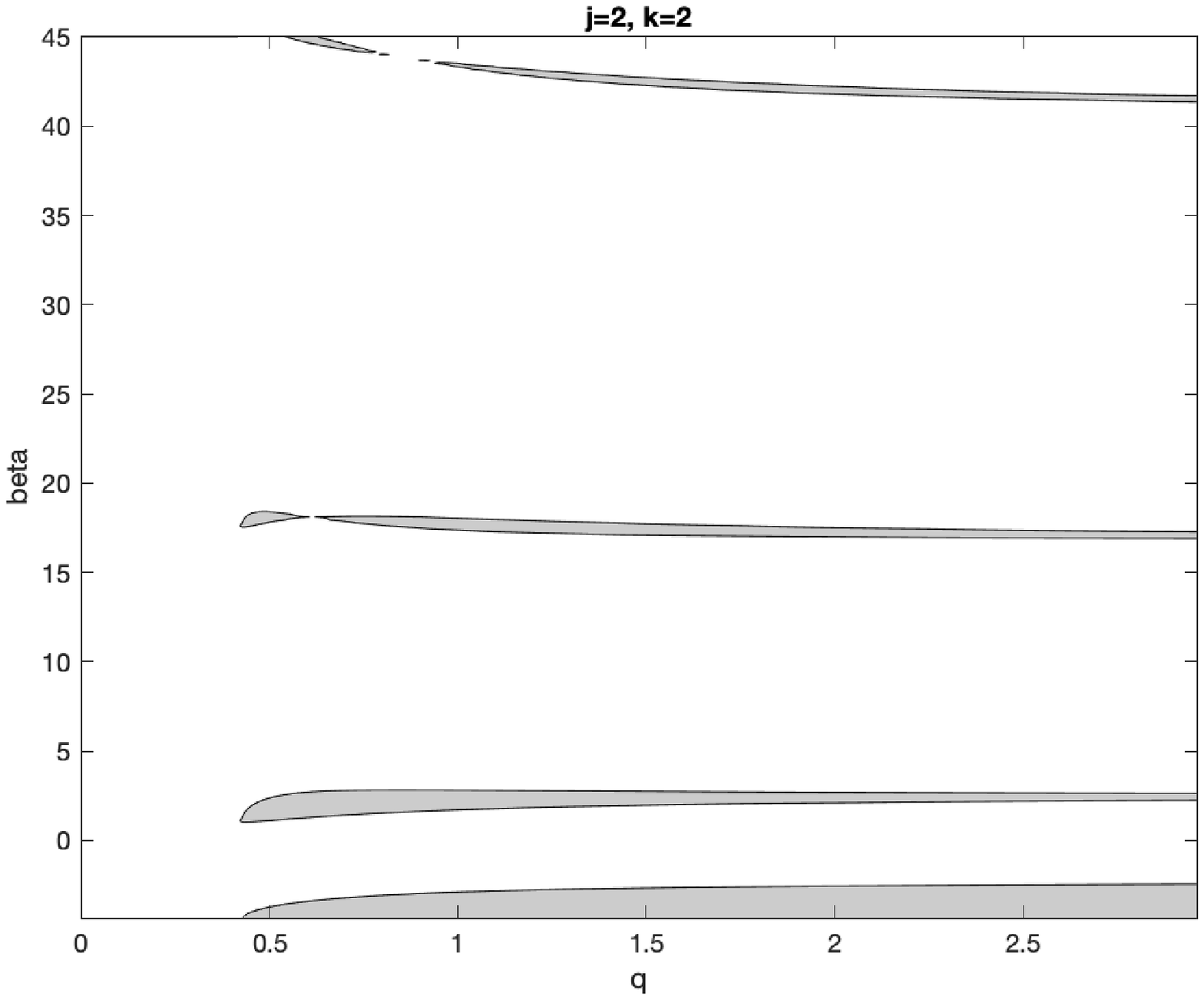}
  \end{minipage}
  \begin{minipage}[b]{0.5\linewidth}
    \centering
    \includegraphics[width=.8\linewidth]{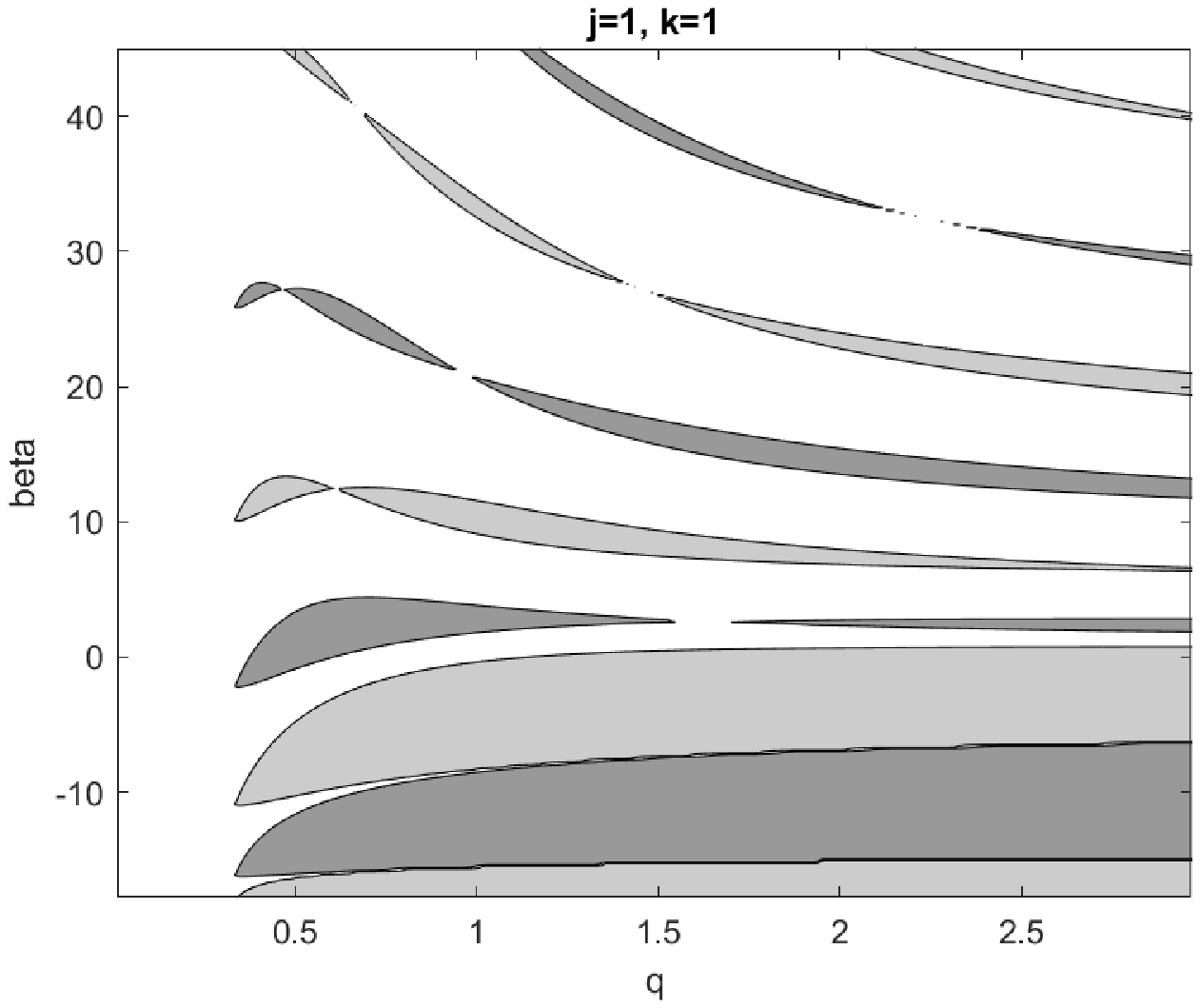}
  \end{minipage}%%
  \begin{minipage}[b]{0.5\linewidth}
    \centering
    \includegraphics[width=.8\linewidth]{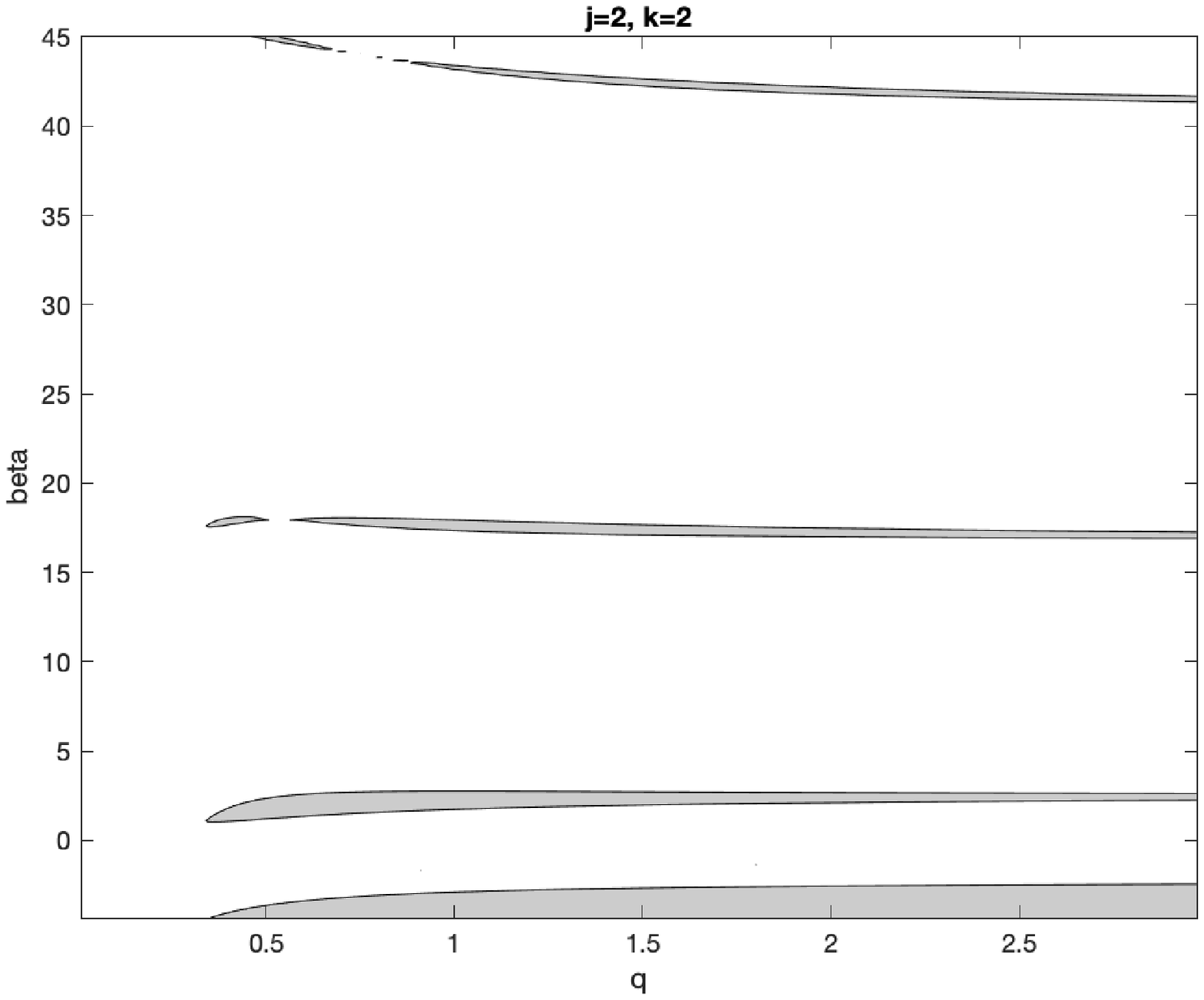}
  \end{minipage}
  \caption{Instability diagrams of systems (\ref{eqqbase})-(\ref{zjjbase}), first line, and (\ref{eqq}, \ref{zjk}), second line.
  The fixed parameters are: $\alpha =1$, $\gamma =3$, $m=3$, $r_0=1/3$.
  $\Delta (q,\beta) >2$ in t
  he light grey zones, and   $\Delta (q,\beta) <-2$ in the dark grey zones.
   Note that on the right column the tongues corresponding to odd index vanish.}
    \label{figu1}
\end{figure}

\begin{figure}[h]
   \begin{minipage}[b]{0.5\linewidth}
    \centering
  \includegraphics[width=.8\linewidth]{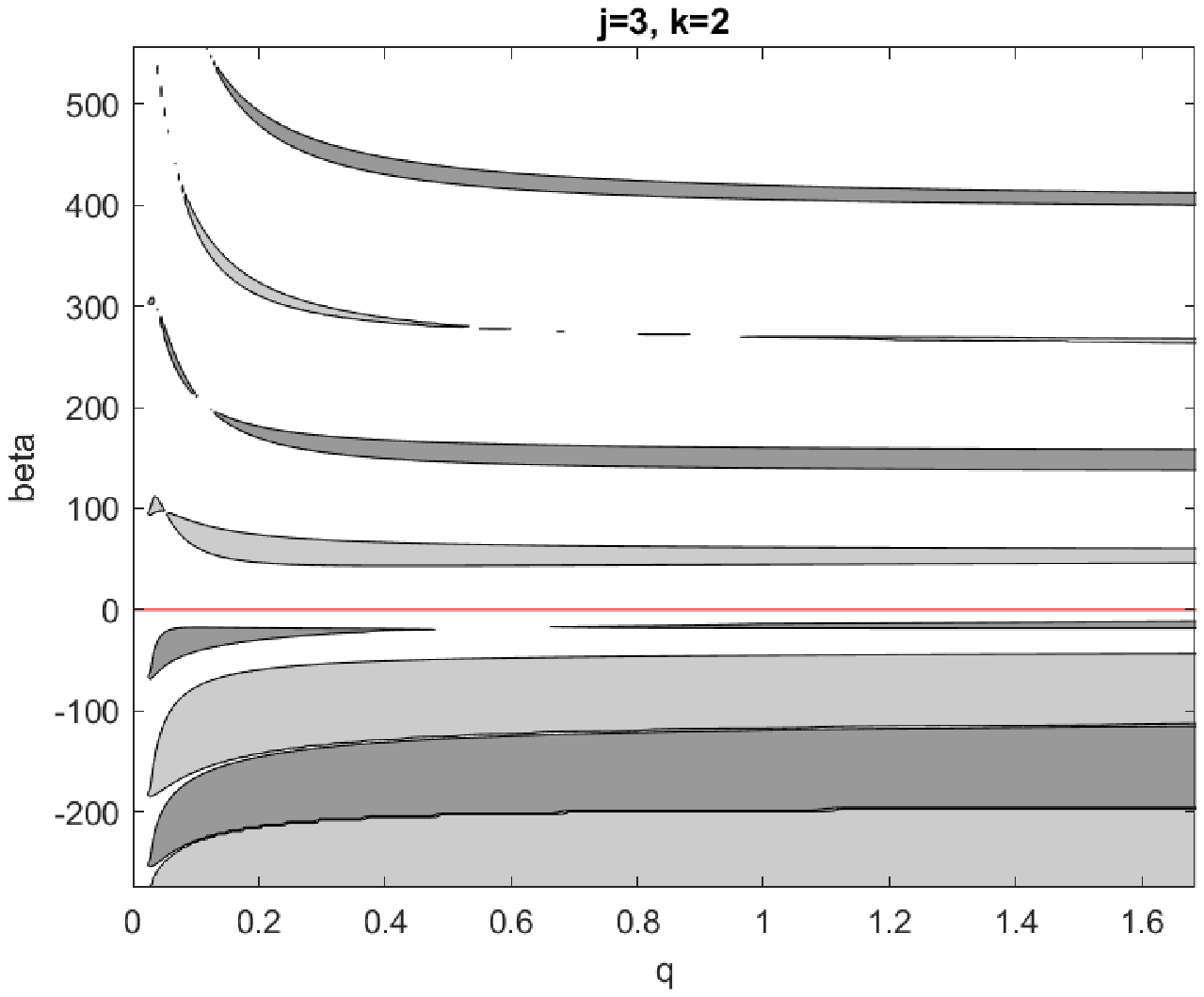}
  \end{minipage}%%
  \begin{minipage}[b]{.5\linewidth}
    \centering
  \includegraphics[width=.8\linewidth]{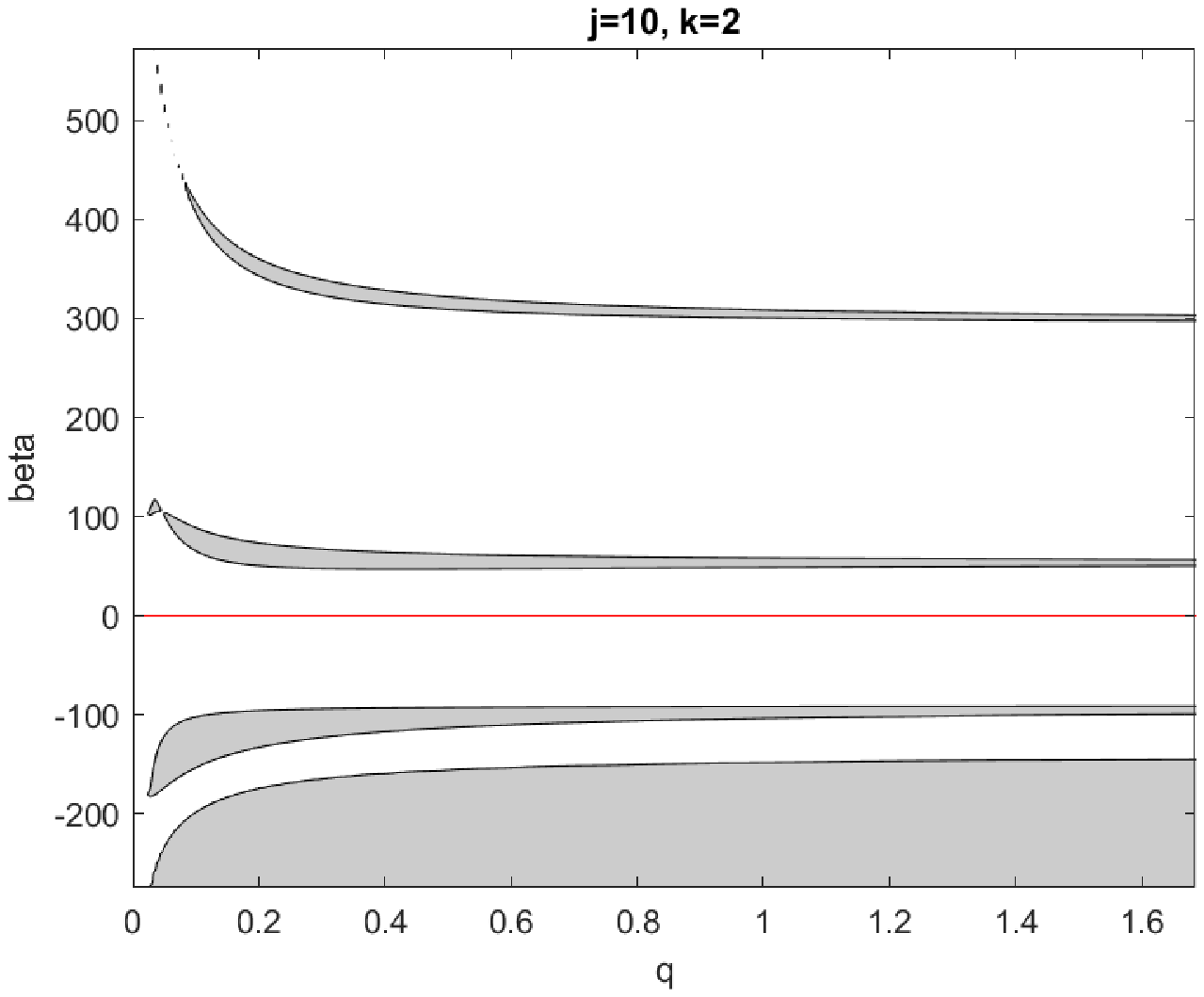}
  \end{minipage}
  \caption{Instability diagrams of system (\ref{eqq})-(\ref{zjk}) for  the MMK function, and $j \neq k$. The fixed parameters (TNB) are: $\alpha =8.0353\cdot 10^{-4}$,  $m=185.1$, $r_0=0.0265$.
 %\caption{Graph of the instability tongues for$j \neq k$. The fixed parameters are: $\alpha =1$, $\gamma =3$, $m=3$, $h=1$.
%The absolute value of the instability discriminant is greater than $2$ in the yellow zones, and lesser than $-2$ in the white zones.
 The actual value $\beta=8.1833\cdot10^{-5}$ of the TNB   is highlighted.
}
 \label{fig2}
\end{figure}

In the first line of Figure \ref{figu1} we show the diagram of the instability tongues for the "irregular approximation" (\ref{eqqbase})-(\ref{zjjbase}) when $j=1$ (left), $j=2$ (right). In the second line of Figure \ref{figu1}  the diagrams of the corresponding instability tongues for the system  (\ref{eqq})-(\ref{zjk}), ($\tilde{f}$ as in (\ref{Mooretilde})), with $j=k$ and with the same parameters,  in order  to compare the diagrams with the first line.
It is worth noting that both systems are linear decoupled systems for low values of $q$, then we have  a trivial case of coexistence of periodic orbits, since
all solutions are periodic.

Coming to the specific features  of the system (\ref{eqqbase})-(\ref{zjjbase}),   the true instability tongues split from the points $(\frac{4}{\pi}r_0, \beta_N(0))$, where $\beta_N(0)$ is as in Proposition (\ref{betaponte}).
We note   the persistence of areas of stability even for large values of $q$, so that their  widths   do  not   shrink  to $0$, as $q\rightarrow \infty$. This  is more evident for greater values of $j$ and  is in accordance with Theorem \ref{main}.
In this regard, it is worth comparing   the instability diagram in Figure  \ref{figu1}, and the one when $p(t)$ is a step function in  problem  (\ref{moltiplicativo}), see \cite{SM}. In both cases $\beta$ has  the same meaning, but in the first case   the parameter $q$ acts both on the period and the width of the steps of the Meissner equation, while in the second case, the parameters $q$ in (\ref{moltiplicativo}) acts only on the height of the steps, as the  period and width of the steps are fixed. Then
 we note that   the width  at  the splitting points $(\frac{4}{\pi}r_0, \beta_N(0))$, unlike (\ref{q0}),  is $O(q)$, $q\rightarrow 0$.  This is due to the singularity of $\bar f_j$ in
 $ r= 4\pi/ r_0$.
Finally we  observe the onset of the so called resonance pockets, that are typical of some  Hill equations of Meissner type, see  \cite{H1,SM}.

The diagrams in the second line of Figure \ref{figu1} are obtained by the numerical solutions of  the system (\ref{eqq})-(\ref{zjk}). We     used the Matlab solver {\tt ode23t} with  a quite high  {\tt reltol} and  {\tt absTol} setting.  Even though we used the closed formulas for $f_j$, $g_{j,k}$ provided in Appendix B, a very long computation time was needed
to obtain an accurate value of $\Delta (q,\beta)$, and a reasonably good quality diagram.     Indeed, at every step in the grid value of $q$, a precise evaluation of the period of $u(t;q)$ was necessary.

Figure \ref{fig2} shows two instability diagrams of the system (\ref{eqq})-(\ref{zjk}) respectively for
$j=3$, $k=2$ on the left, and $j=10$, $k=2$ on the right. In this case  $j\neq k$,  so that  the approximated    system (\ref{eqqbase})-(\ref{zjjbase}) is no more available.

More precisely, we decided to use the set of structural parameters of the Tacoma Narrow bridge (TNB) reported in \cite{F} that are derived from the Technical Report (1941) of AMMANN et al. \cite{AKW}.
The mechanical parameters of the TNB
are characterized by very different orders of magnitude. Nonetheless the general picture of the instability tongues (behavior at the splitting points and resonance pockets) remains the same and seems to be a constant feature of the MMK model.

The reports about the failure of the TNB show that the transfer from flexural to torsional energy regarded most probably the $9-2$ or the $10-2$ modes (see for example \cite{Gaz} for an extensive owerwiew). We checked all the modes $j$-2, for $2\leq j\leq 10$, and $0<q\leq 1.6$ m (flexural oscillations of amplitude of about one and a half meters were reported by witness, see for example again \cite{F}, Appendix A).
For our simplified model the tongues corresponding to an even $N$ are relatively thick in the given interval for $q$, but stay far away from the significant value of $\beta$. Whereas the tongues related to an odd $N$ get very thin as $j$ grows, and also such modes do not present any instability near the significant value of $\beta$. We point out that the fish-bone model does not take into account the dynamics of the suspension cables and their mechanical parameters, then our simulations are interesting only from a mathematical point of view.

\appendix
%%%%%%%%%%%%%%%%%
%%% APPENDIX  varie
%%%%%%%%%%%%%%%%%

\section{Regularity  of $\psi_1$ and $\psi_2$ }
\label{C1}

% LEMMA NON DEL TUTTO SODDISFACENTE
\begin{lemma}
\label{derivata}
Let $f$ be a real valued function such that  $\, f\in C^0(\R) $, $f\in C^1(\R \setminus \{r_0\})$, and  $ \quad \sup_{ \R \setminus \{r_0\}} |f'(x)| < \infty$, $r_0 \in \R$. Let $I$ be the closed interval $[a,b]$, $\, u \in C^1(\R^{n+1})$,  $w \in L^\infty(I)$, and define
$$ G(p) = \int_{I} f(u(x,p))w(x)\,  dx, \qquad  (p\in \R^n). $$

If, for each $\, p \in \R^n$,
\begin{equation}
\label{b}
 u(x,p) =r_0, \text{ has (at most) a finite number of solutions} \quad  x \in I,
\end{equation}
then $\,G \in C^1(\R^n)$,  and the following differentiation formula holds for every $p\in \R^n$, $$ \bigtriangledown G(p) = \int_{I} f'(u(x,p))\,  \bigtriangledown_p u(x,p) w(x)\,dx.  $$

\end{lemma}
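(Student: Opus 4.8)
The plan is to reduce the statement to two applications of the dominated convergence theorem, the only subtle point being that the composition $f\circ u$ may fail to be differentiable in $p$ precisely at those $x\in I$ for which $u(x,p)=r_0$; hypothesis (\ref{b}) guarantees that this set is finite, hence Lebesgue--null, so it can be discarded in every integral. As a preliminary, I would record that, since $f\in C^0(\R)$ and $f'$ is bounded on $\R\setminus\{r_0\}$, applying the mean value theorem separately on $(-\infty,r_0]$ and on $[r_0,+\infty)$ shows that $f$ is globally Lipschitz with constant $L:=\sup_{\R\setminus\{r_0\}}|f'|$; in particular $G(p)$ is finite, because $f(u(\cdot,p))$ is bounded on the compact interval $I$ and $w\in L^\infty(I)$. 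Fixing $i\in\{1,\dots,n\}$ and $p\in\R^n$, the candidate integrand $x\mapsto f'(u(x,p))\,\partial_{p_i}u(x,p)\,w(x)$ is defined for all $x\in I$ outside the finite set $\{x\in I:u(x,p)=r_0\}$, is measurable, and is bounded by $L\,\big(\sup_{x\in I}|\partial_{p_i}u(x,p)|\big)\,\|w\|_{L^\infty(I)}<\infty$; hence $H_i(p):=\int_I f'(u(x,p))\,\partial_{p_i}u(x,p)\,w(x)\,dx$ is well defined.

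Next I would prove that $\partial_{p_i}G(p)=H_i(p)$. Writing the difference quotient as $t^{-1}\big(G(p+te_i)-G(p)\big)=\int_I t^{-1}\big(f(u(x,p+te_i))-f(u(x,p))\big)\,w(x)\,dx$, I fix $x\in I$ with $u(x,p)\neq r_0$: by continuity of $p'\mapsto u(x,p')$, for $|t|$ small the point $u(x,p+te_i)$ stays inside a neighbourhood of $u(x,p)$ avoiding $r_0$, where $f\in C^1$, so the chain rule gives $t^{-1}\big(f(u(x,p+te_i))-f(u(x,p))\big)\to f'(u(x,p))\,\partial_{p_i}u(x,p)$ as $t\to0$; by (\ref{b}) this holds for a.e.\ $x\in I$. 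For the domination, the Lipschitz bound together with the one-variable mean value theorem give, for $|t|\le1$, $\big|t^{-1}\big(f(u(x,p+te_i))-f(u(x,p))\big)\big|\le L\,|\partial_{p_i}u(x,p+\xi e_i)|\le LC$ for some $\xi$ between $0$ and $t$, where $C:=\sup\{|\partial_{p_i}u(x',p')|:x'\in I,\ |p'-p|\le1\}<\infty$ since $u\in C^1$ and the set is compact; multiplying by $|w(x)|\le\|w\|_{L^\infty(I)}$ produces an $L^1(I)$ majorant independent of $t$, and dominated convergence yields $\partial_{p_i}G(p)=H_i(p)$.

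Then I would establish continuity of each $H_i$. If $p_m\to p$ in $\R^n$, then for every $x\in I$ with $u(x,p)\neq r_0$ we have $u(x,p_m)\to u(x,p)\neq r_0$, and since $f'$ is continuous in a neighbourhood of $u(x,p)$ and $\partial_{p_i}u$ is continuous, $f'(u(x,p_m))\,\partial_{p_i}u(x,p_m)\,w(x)\to f'(u(x,p))\,\partial_{p_i}u(x,p)\,w(x)$; again, by (\ref{b}) applied at $p$, this holds for a.e.\ $x\in I$. For $m$ large the $p_m$ lie in the closed unit ball $\overline B(p,1)$, so the integrands are dominated by $L\,\big(\sup_{I\times\overline B(p,1)}|\partial_{p_i}u|\big)\,\|w\|_{L^\infty(I)}\in L^1(I)$, and dominated convergence gives $H_i(p_m)\to H_i(p)$. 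Hence all first partials of $G$ exist and are continuous, so $G\in C^1(\R^n)$ with $\nabla G(p)=\int_I f'(u(x,p))\,\nabla_p u(x,p)\,w(x)\,dx$.

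The main obstacle, and the only place where hypothesis (\ref{b}) is used, is the passage to the a.e.\ limit in these two dominated-convergence arguments: if the set $\{x\in I:u(x,p)=r_0\}$ had positive measure, the difference quotients (respectively the integrands defining $H_i$) could fail to converge on a non-negligible set and the whole scheme would collapse. Everything else is routine once one exploits the global Lipschitz character of $f$ and its local $C^1$ behaviour away from $r_0$.
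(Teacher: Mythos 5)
Your proof is correct, and it takes a genuinely different route from the paper's. You argue pointwise at each fixed $p$: the global Lipschitz bound $L=\sup|f'|$ together with the mean value theorem applied to $t\mapsto u(x,p+te_i)$ gives a constant, integrable majorant for the difference quotients; hypothesis (\ref{b}) makes the exceptional set $\{x\in I:u(x,p)=r_0\}$ finite, hence Lebesgue-null, so the quotients converge a.e.\ to $f'(u(x,p))\,\partial_{p_i}u(x,p)$; and two applications of dominated convergence yield the formula for $\nabla G$ and its continuity. The paper instead localizes around the finitely many bad points: after reducing to a single $(x_0,p_0)$ with $u(x_0,p_0)=r_0$ (and to $n=1$), it splits $G=G_{1,\varepsilon}+G_{2,\varepsilon}$ according to $|x-x_0|\geq\varepsilon$ or $|x-x_0|<\varepsilon$, shows via a compactness/distance argument that $f\circ u$ is $C^1$ on the first region for $p$ near $p_0$ (so the classical rule for differentiating under the integral applies there), bounds the second region's contribution by $O(\varepsilon)$ uniformly in the increment, and concludes with a limsup as $\varepsilon\to0$; continuity of the derivative is proved by the same splitting. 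Your version is shorter, works directly in $\R^n$, and clarifies why the paper's cautionary remark about the ``traditional'' differentiation theorem is no obstruction: that theorem requires the null set of non-differentiability in $p$ to be independent of $p$, whereas your scheme only needs the level set $\{u(\cdot,p)=r_0\}$ to be null at each fixed $p$ — so it would even cover the relaxed versions of (\ref{b}) mentioned at the end of Appendix A. What the paper's splitting buys in exchange is that it stays entirely within the classical $C^1$ framework, and it exhibits very explicitly how the finiteness assumption controls, uniformly for $p$ near $p_0$, the size of the region where smoothness of $f\circ u$ fails.
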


\begin{proof}
For simplicity we assume $n = 1$. The general case $n \geq  1$ follows exactly in the same way.

%in the case when $w \not \equiv 1$ there are substantially obvious changes which we %briefly outline at the end.
We set
$$ \Gamma = \{(x,p) \in \R^2: \, u(x,p)=r_0\} , \qquad \Omega = \R^2\setminus \Gamma,$$
and let  $p_0\in \R$ be a number  such that there exists a  finite number of   points  $(x_j,p_0)\in \Gamma$, $j=0, 1, ..., m$. Possibly by decomposing  the interval $I$  into a finite union of intervals $I_j$  in which $u(x,p)=r_0$ has a single solution, we may assume that there exists a unique point $(x_0,p_0) \in \Gamma$.

Let us fix $\vare >0 $ and define
$$ K_\vare^0 = \{ (x,p_0):\, x\in I,   |x-x_0| \geq \vare\}. $$

Since $K_\vare^0$ is compact, its distance from the closed set $\Gamma$ is strictly positive. Then there exists $\delta = \delta(\vare)$, such that
$$ K_\vare^\delta  = \{ (x,p):\, x\in I,  |x-x_0| \geq \vare, |p-p_0| \leq \delta \}  \subseteq \Omega. $$

By the continuity of $u$, we have either  $u <r_0$ or $u> r_0$ on each connected component of $ K_\vare^\delta $, thus $f\circ u \in C^1(K_\vare^\delta)$.

Let us set $$\, g(p) =  \int_{I} f'(u(x,p))\,  \frac{\partial u}{\partial p} (x,p)w(x) \,dx, $$
 and  split the function $G$ in the following way:
$$ G(p) = G_{1,\vare}(p) + G_{2,\vare}(p) \qquad (|p-p_0| \leq \delta),$$
where
$$ G_{1,\vare}(p) =  \int_{x\in I,  |x-x_0|\geq \vare} f(u(x,p))w(x)\,  dx, \qquad G_{2,\vare}(p) = \int_{x\in I,  |x-x_0|< \vare} f(u(x,p))w(x)\,  dx , $$
so that $G_{1,\vare} \in C^1([p_0-\delta,p_0+\delta])$ by the standard rule for differentiation under the integral sign.   As for the second term we have,
\begin{eqnarray}
\label{lip}
 \left | \frac{G_{2,\vare}(p_0 +h) -G_{2,\vare}(p_0)}{h} -  \int_{x\in I,  |x-x_0|< \vare} f'(u(x,p_0)) \frac{\partial u}{\partial p} (x,p_0) w(x) dx \right|   & \leq  & \nonumber  \\[10pt]
 &  &  \hspace{- 12 cm}  \int_{x\in I,  |x-x_0|< \vare} \left|  \frac{f(u(x,p_0 +h)) -f(u(x,p_0 ))}{h} w(x) \right|  + \left| f'(u(x,p_0))  \frac{\partial u}{\partial p} (x,p_0) w(x) \right|\, dx  \nonumber
% \qquad   ( =: A_h)
\end{eqnarray}

Since $f$ has bounded derivative  (thus Lipschitz continuous), $w\in L^\infty$, and $u\in C^1$, the sum of both terms in the last integral  is bounded above by some positive constant $M$. In conclusion, by
considering the incremental ratio of both $G_{1,\vare}$, and $G_{2,\vare}$ we get
$$ \limsup_{h\rightarrow 0}   \left| \frac{G(p_0 +h) -G(p_0)}{h}-   g(p_0) \right| \leq 2M\vare, $$
for every $\vare >0$,  which proves that $G'(p_0) = g(p_0)$.

Now we prove the continuity of $g(p)$. With the same notations as before, for any fixed $\vare>0$, we have
$$ g(p) = G'_{1,\vare} (p) +  \int_{x\in I,  |x-x_0|< \vare} f'(u(x,p)) \frac{\partial u}{\partial p} (x,p) w(x) \, dx
\qquad (|p-p_0|\leq \delta), $$
where $G'_{1,\vare} \in C^0([p_0-\delta,p_0+\delta])$. As for the other term, as before the boundedness of $f'$, and the
regularity of $u$, yield
$$ \int_{x\in I,  |x-x_0|< \vare} \left| f'(u(x,p)) \frac{\partial u}{\partial p} (x,p) w(x) \right|  dx  \leq 2M\vare \qquad  (|p-p_0|\leq \delta).$$

Therefore we infer that
$$
  | g(p) - g(p_0)  |    \leq   | G'_{1,\vare}(p) -  G'_{1,\vare} (p_0) | + 4M\vare ,
$$
so that
$$ \limsup_{p\rightarrow p_0}  | g(p) - g(p_0)  |  \leq  4M\vare, $$
which concludes the proof of the Lemma.

\end{proof}

%%%%%%%%%%%%%%%%

\noindent {\bf Proof of Proposition \ref{psi}}. We apply  Lemma \ref{derivata} to $f$ satisfying ($\bf{S_0}$), $I=[0,\pi]$, $p=(y,z) \in \R^2$, $u(x,p) = y\sin(jx)\pm z\sin(kx)$, $ w(x) = \sin(jx)$ or $ w(x) = \sin(kx)$. The lemma has a local character and it is enough to verify the hypotheses in a neighborhood of each point $p_0= (y_0,z_0) \in \R^2$, therefore we may  assume that the function $f$ is not differentiable at a single point
$r_0$, and that has bounded derivative  elsewhere. The regularity of $u$, and $w$  being obvious, we must verify  the assumption (\ref{b})
for $r_0 \neq 0$ (recall that this was required in ($\bf{S_0}$)).
By contradiction, if for a fixed $(y,z)\in \R^2 $, the equation
$ u(x,y,z)= r_0$ were satisfied for an infinite set of $x\in I$, since $x\mapsto u(x,y,z)$ then $u(x,y,z)= r_0$ for every $x \in \R$. This follows by the unique continuation principle applied to the analytic function  $x\mapsto u(x,y,z)$. In particular we would have $u(0,y,z) = 0 = r_0$, which is a contradiction.

\

We conclude this Appendix with a few  remarks on the differentiation Lemma \ref{derivata}.

The lemma, in spite of its simplicity,    cannot be derived from the traditional  Lebesgue theorem of differentiation under integral, as one can easily verify. A simple example is provided by the function $\, G(p) = \int_0^1 |x-p| \, dx $, which is differentiable everywhere, and of class $C^1$.  As a matter of fact,   Lemma \ref{derivata} is more a regularity result  than a sufficient condition to differentiate under integral sign.

The condition (\ref{b}) serves to our purposes but can be easily relaxed, for example by assuming that the set $\{x\in I: \, u(x,p) =r_0\}$   has (at most) a finite number of accumulation points for any fixed $p$.  A trivial example in which this last condition is violated, for $p=0$, is given by $G(p) = \int_0^1 |p| \, dx$, which of course is not differentiable at $p=0$. In this regard, we point out the relevance  in the application of the Lemma  to the functions $\psi_1$, and $\psi_2$, in which $u(x,p) = y\sin(jx)\pm z\sin(kx)$, to the assumption $r_0\neq 0$.
On the other hand, any relaxed version of condition (\ref{b}) is far from being necessary, as the simple example $G(p) = \int_0^1 |p^3| \, dx$ shows.
In   literature there are other more or less classical differentiation  results, see \emph{e.g. } \cite{S}, but we have found that for our purposes the verification of the hypotheses would have been more difficult than the direct proof of the lemma.

\section{Explicit formulas  for the MMK function}
\label{formula gjk_Moore}

The non-linear terms of the system (\ref{ODE}) and the periodic coefficient of the Hill equation    (\ref{zjk})   are both defined as an integral.
This fact becomes very time consuming  when it comes to drawing the   instability tongues with Matlab, as the solution of some ten thousands of systems is  needed.
So, in the case of the MMK slackening model (\ref{slack}), we decided to provide  the explicit computation of   $g_{j,k}(r)$ in (\ref{zjk}),  for every choice of $j$ and $k$.
The result in Lemma \ref{teorematheta} for $r_0=0$  is  necessary   for the computation of the limit system in Section \ref{high energies}.

We need a simple computational lemma.

\begin{lemma}
Let $j$, $k$ be two positive integers, then
\begin{equation}
\label{q}  q(j,k) : =\sum_{n=1}^{j}{\cos(\frac{2k}{j}n\pi)} \, = \,  \begin{cases}
     j & \text{if  } \quad k/j \in \N, \\
     0 & \text{otherwise}.
     \end{cases}
\end{equation}
\begin{equation}
\label{p} p(j,k):=\sum_{n=1}^{j}{(-1)^n\sin(\frac{2k}{j}n\pi)}=\begin{cases}
     0 & \text{ if  }  j \text{ even }, \\
      -\, \tan(\frac{k}{j}\pi) & \text{ otherwise}.
\end{cases}.\end{equation}
\end{lemma}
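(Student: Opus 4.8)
The plan is to evaluate both sums by writing them as the real, respectively imaginary, part of a finite geometric series whose ratio is, up to sign, a $j$-th root of unity.

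For (\ref{q}) I would set $\omega = e^{2\pi i k/j}$, so that $q(j,k) = \mathrm{Re}\sum_{n=1}^{j}\omega^n$, and observe that $\omega^j = e^{2\pi i k}=1$. Hence, if $\omega = 1$, that is if $k/j\in\N$, the sum equals $j$; otherwise $\omega\neq 1$ and $\sum_{n=1}^{j}\omega^n = \omega(\omega^j-1)/(\omega-1)=0$. Taking real parts yields (\ref{q}).

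For (\ref{p}) the idea is to absorb the alternating sign into the exponent: since $(-1)^n = e^{i\pi n}$ one has $p(j,k) = \mathrm{Im}\sum_{n=1}^{j}\zeta^n$ with $\zeta := -e^{2\pi i k/j} = e^{i\pi(1+2k/j)}$, so that $\zeta^j = (-1)^j$. If $j$ is even, then $\zeta^j = 1$, and either $\zeta = 1$, in which case $\sum_{n=1}^{j}\zeta^n = j\in\R$, or $\zeta\neq 1$, in which case the geometric sum vanishes; in both cases the imaginary part is $0$. If $j$ is odd, then $\zeta^j = -1\neq 1$, so $\zeta\neq 1$ and
$$\sum_{n=1}^{j}\zeta^n = \zeta\,\frac{\zeta^j-1}{\zeta-1} = \frac{-2\zeta}{\zeta-1} = \frac{-2}{1-\zeta^{-1}}.$$
Writing $\theta := 2\pi k/j$ and using $\zeta^{-1} = -e^{-i\theta}$ together with the half-angle identity $1+e^{-i\theta} = 2\cos(\theta/2)\,e^{-i\theta/2}$, this equals $-e^{i\theta/2}/\cos(\theta/2) = -1 - i\tan(\theta/2)$, whose imaginary part is $-\tan(\pi k/j)$, giving (\ref{p}).

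\noindent I do not anticipate a genuine obstacle: the only points requiring a little care are separating out the degenerate cases $\omega = 1$ and $\zeta = 1$ before summing the geometric series, and noting that for $j$ odd one has $\cos(\pi k/j)\neq 0$ (since $2k/j$ cannot then be an odd integer), so that the right-hand side of (\ref{p}) is well defined.
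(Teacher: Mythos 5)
Your proposal is correct and follows essentially the same route as the paper: both write $q(j,k)$ and $p(j,k)$ as the real and imaginary parts of a finite geometric series with ratio $e^{2k\pi i/j}$ (respectively $-e^{2k\pi i/j}$), sum it after separating the degenerate case, and reduce the odd-$j$ case to $-\tan(k\pi/j)$ via a half-angle identity. The only differences are cosmetic (index range $1,\dots,j$ versus $0,\dots,j-1$, and which form of the half-angle identity is used).
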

\begin{proof}
Let us prove (\ref{q}). In the case when $k/j \in \N$, we obviously have $\, q(j,k)= j $; otherwise
we get  ($i= \sqrt{-1}$, $Re=$ real part),
$$ q(j,k) = \sum_{n=0}^{j-1}{\cos(\frac{2k}{j}n\pi)} =Re \, \sum_{n=0}^{j-1} e^{\frac{2k \pi  i}{j}n}  = Re \, \frac{1 - e^{2k \pi i }}{1- e^{ 2k \pi i/j}}=0. $$

In order to prove (\ref{q}),  we observe that, if   $\frac{2k}{j}\in \N$, then $p(j,k)=0$. Otherwise, we have ($Im$ = imaginary part),
 $$ p(j,k) = \sum_{n=0}^{j-1}{(-1)^n\sin(\frac{k}{j}2n\pi)}= Im \, \sum_{n=0}^{j-1} (-e^{\frac{k}{j}2\pi i})^n= Im \, \frac{1-(-1)^j}{ 1+ e^{\frac{k}{j}2\pi i}},$$
 thus $\, p(j,k) =0\,$ for $\, j $ even. When $j$ is odd, we get
$$p(j,k)=-\, \frac{\sin (\frac{k}{j}2\pi)}{1+\cos(\frac{k}{j}2\pi)}= -\, \tan(\frac{k}{j}\pi).$$

\end{proof}

The closed form of the function $\, g_{j,k} \,$, in the case of MMK function  (\ref{slack}), is given by $\, 2m/\pi$ times the function $\, H_{j,k}(r)\, $ defined here below.
\begin{lemma}
\label{teorematheta}
Let $j$, $k$ be positive integers, $r_0\geq 0$,  and  let $H_{j,k}(r)$ be the function defined as ($H[\cdot] = $ Heaviside step function),
%\begin{equation}
%\label{gtildeMoore}
$$
H_{j,k}(r) := \int_0^{\pi}H( r\sin jx+r_0) \, \sin^2 kx \ dx \qquad (r\in \R).
$$
%\end{equation}

Let us set  $\; \theta(r)={\rm asin}(r_0/|r|)$, for $|r|\geq r_0$.

Then, for every $j$, $k$, we have
$$\, H_{j,k}(r)= \frac{\pi}{2} \qquad \text{ for } \; |r|\leq r_0, $$
while  for $|r|> r_0$   (note that for even $j$  the second row vanishes),
\begin{eqnarray}
 \label{Hjk}
% H_{j,k}(r)  &= & \frac{\pi}{4}+ \frac{\theta(r)}{2} - \frac{q(j,k)}{4k} \, \sin\left( \frac{2k\theta(r)}{j}\right), \qquad j \text{  even}; \\
H_{j,k}(r)  & = & \frac{\pi}{4}+ \frac{\theta(r)}{2} - \frac{q(j,k)}{4k} \, \sin\left( \frac{2k\theta(r)}{j}\right)  + \nonumber \\
 & &   \hspace{-2cm}
 \, + \, \frac{1-(-1)^j}{2}\,  \frac{r}{|r|} \left[\frac{\pi}{4j} - \frac{\theta (r)}{2j}  +\frac{p(j,k)}{4k }\cos\left(\frac{2k \theta(r)}{j}\right)+ \frac{1}{4k}\sin\left(\frac{2k\theta(r)}{j}\right) \right].
 %\qquad j \text{  odd}.
  \end{eqnarray}
  \end{lemma}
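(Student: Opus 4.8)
\emph{Proof plan.} The idea is to fold the integral over $[0,\pi]$ onto a fundamental cell via the substitution $z=jx$, exploit the sign of $\sin w$ there to reduce the Heaviside factor to the indicator of a simple set, and then resum using the elementary trigonometric sums $q(j,k)$, $p(j,k)$ of the preceding lemma.

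For $|r|\le r_0$ one has $r\sin jx+r_0\ge r_0-|r|\ge 0$, so the Heaviside factor is $1$ a.e.\ and $H_{j,k}(r)=\int_0^\pi\sin^2 kx\,dx=\pi/2$. So assume $|r|>r_0$, and set $\theta=\theta(r)={\rm asin}(r_0/|r|)$ and $\sigma=r/|r|$. Substituting $z=jx$, splitting $[0,j\pi]$ into the intervals $[(n-1)\pi,n\pi]$, $n=1,\dots,j$, and shifting each back to $[0,\pi]$ via $\sin(w+(n-1)\pi)=(-1)^{n-1}\sin w$ gives
$$
H_{j,k}(r)=\frac1j\sum_{n=1}^{j}\int_0^\pi H\bigl((-1)^{n-1}r\sin w+r_0\bigr)\,\sin^2\!\Bigl(\tfrac{k(w+(n-1)\pi)}{j}\Bigr)\,dw .
$$
Since $\sin w\ge 0$ on $[0,\pi]$, the Heaviside factor equals $1$ when $(-1)^{n-1}\sigma=1$ and equals $1-\mathbbm{1}_{(\theta,\pi-\theta)}(w)$ when $(-1)^{n-1}\sigma=-1$; writing the selector of the latter indices as $\tfrac{1+\sigma(-1)^{n}}{2}$ and using $\tfrac1j\int_0^{j\pi}\sin^2(kz/j)\,dz=\pi/2$, we reach
$$
H_{j,k}(r)=\frac\pi2-\frac1{2j}\sum_{n=1}^{j}\bigl(1+\sigma(-1)^{n}\bigr)\int_\theta^{\pi-\theta}\sin^2\!\Bigl(\tfrac{k(w+(n-1)\pi)}{j}\Bigr)\,dw .
$$

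The remaining integral is elementary: writing $\sin^2(\cdot)=\tfrac12-\tfrac12\cos(\cdot)$, integrating, rescaling, and using $\tfrac{2k\pi}{j}+\tfrac{2k(n-1)\pi}{j}=\tfrac{2kn\pi}{j}$, one is left with $n$-sums of the four types $\sum_n\cos\tfrac{2kn\pi}{j}$, $\sum_n\sin\tfrac{2kn\pi}{j}$, $\sum_n(-1)^n\cos\tfrac{2kn\pi}{j}$, $\sum_n(-1)^n\sin\tfrac{2kn\pi}{j}$, together with $\sum_n 1=j$ and $\sum_n(-1)^n=-\tfrac{1-(-1)^j}{2}$. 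By the previous lemma the first sum is $q(j,k)$ and the last is $p(j,k)$; the second vanishes, and the third equals $0$ for $j$ even and $-1$ for $j$ odd, by the same geometric-series computation. Substituting these values and collecting terms yields exactly (\ref{Hjk}): the part of the sum not carrying $(-1)^n$ produces $\tfrac\pi4+\tfrac{\theta}{2}-\tfrac{q(j,k)}{4k}\sin\tfrac{2k\theta}{j}$, while the $(-1)^n$-weighted part, which carries the factor $\tfrac{1-(-1)^j}{2}$ and the sign $r/|r|$, produces the bracketed second row; for $j$ even it vanishes identically (consistently with $p(j,k)=0$ and $\sum_n(-1)^n\cos\tfrac{2kn\pi}{j}=0$), which is the remark recorded after the statement.

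The only genuinely fiddly step is this last resummation: the two endpoints of $\int_\theta^{\pi-\theta}$ contribute $\sin$-terms with arguments of both $\tfrac{2kn\pi}{j}$-type and shifted $\tfrac{2k(n-1)\pi}{j}$-type, and one must verify that the shifted sums reduce to the same $q(j,k)$, $p(j,k)$ after discarding the discrepancies between $\sum_{m=0}^{j-1}$ and $\sum_{m=1}^{j}$ (which only involve the harmless endpoint terms $m=0$ and $m=j$). No regularity issues arise, everything being a finite sum of elementary integrals.
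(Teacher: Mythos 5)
Your folding of the integral onto $[0,\pi]$ is a legitimate reorganization of the paper's computation (the paper instead stays in the variable $z=jx$ and integrates over the explicit sets $B_n^{\pm}$ where $r\sin z+r_0\ge 0$, treating $j$ even/odd and the sign of $r$ separately), and everything is correct up to the identity, in your notation $\sigma=r/|r|$,
$$
H_{j,k}(r)=\frac{\pi}{2}-\frac{1}{2j}\sum_{n=1}^{j}\bigl(1+\sigma(-1)^{n}\bigr)\int_\theta^{\pi-\theta}\sin^2\!\Bigl(\tfrac{k(w+(n-1)\pi)}{j}\Bigr)\,dw .
$$
The gap is in the final resummation, exactly at the step you yourself flag as fiddly. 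Besides $q(j,k)$, $p(j,k)$ and $\sum_n\sin\frac{2kn\pi}{j}=0$, the expansion of the endpoint sines produces the combination $\sum_{n=1}^j(-1)^n\bigl[\cos\frac{2kn\pi}{j}+\cos\frac{2k(n-1)\pi}{j}\bigr]$, and your parenthetical claim that the shift $n\mapsto n-1$ only creates \emph{harmless} endpoint discrepancies is false here when $j$ is odd: the $m=0$ term of the shifted alternating cosine sum is $+1$ while the $m=j$ term is $(-1)^j=-1$. In fact the interior terms cancel and one gets $\sum_{n=1}^j(-1)^n\bigl[\cos\frac{2kn\pi}{j}+\cos\frac{2k(n-1)\pi}{j}\bigr]=(-1)^j-1$, i.e. $-2$ for odd $j$, and this value is precisely the source of the term $\frac{1}{4k}\sin\bigl(\frac{2k\theta}{j}\bigr)$ inside the bracket of the second row of (\ref{Hjk}). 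With the values you state (third sum $=-1$ for odd $j$, shifted sums ``reduced to the same''), collecting terms does \emph{not} yield (\ref{Hjk}): that term is lost. Already for $j=1$, $r<-r_0$, your bookkeeping gives $\theta-\frac{\sin(2k\theta)}{4k}$ instead of the correct $H_{1,k}(r)=\theta-\frac{\sin(2k\theta)}{2k}$.

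A second, minor slip: $\sum_{n=1}^j(-1)^n\cos\frac{2kn\pi}{j}$ is not always $0$ for even $j$; when $2k/j$ is an odd integer (e.g. $j=2$, $k=1$, or $j=4$, $k=2$) the geometric series degenerates and the sum equals $j$. This happens not to harm the final formula, because only the combination with the shifted sum enters and for even $j$ the two cancel exactly, but as stated it needs the same exceptional-case discussion the paper gives for $q(j,k)$ when $k/j\in\N$. Once you keep the shifted alternating cosine sum as it stands — or simply record the telescoping identity $\sum_{n=1}^j(-1)^n\bigl[\cos\frac{2kn\pi}{j}+\cos\frac{2k(n-1)\pi}{j}\bigr]=(-1)^j-1$ — your computation does reproduce (\ref{Hjk}), including the vanishing of the second row for even $j$.
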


\begin{proof}
If $|r|\leq r_0$, we have     $ \, r\sin jx +r_0 \geq 0$,    thus $ H_{j,k} (r) = \int_0^{\pi}\sin^2 kx \ dx=\pi/2$.

The other cases are much more involved. First of all, by changing the integration variable, we write
$$H_{j,k} (r) =\frac{1}{j}\int_0^{j\pi} H(r \sin z+r_0)  \sin^2 (kz/j) \,dz; $$
and  observe that, for $\, z\in [0,2\pi]$, we have
$$\, r \sin z+r_0 \geq 0 \; \text{ iff }   \, z\in B^+_0(r) := [0 ,\pi + \theta (r)]\cup  [2\pi - \theta (r),2\pi]  \qquad (r>r_0), $$
$$\, r \sin z+r_0 \geq 0 \; \text{ iff }   \, z\in B^-_0(r) := [0,  \theta (r)]\cup  [\pi - \theta (r),2\pi]  \qquad (r<- r_0);  $$
then we set $\, B^{\pm}_n(r) = B^{\pm}_0(r) + 2n\pi $ (the translated sets).

We begin with $r>r_0$, and $j$ even.
 We get
$$H_{j,k}(r) = \frac{1}{j} \sum_{n=0}^{j/2-1} \int_{B^+_n(r)} \sin^2 (kz/j) \,dz; $$  by direct computation of the integrals, we obtain
\begin{eqnarray*}
\frac{1}{j} \sum_{n=0}^{j/2-1} \int_{B^+_n(r)} \sin^2 (kz/j) \,dz & = & \frac{\pi}{4}+\frac{\theta}{2}+   \frac{1}{4k}  \sum_{n=0}^{j/2-1} \left( \sin(4kn\pi/j) -  \sin(4k(n+1)\pi/j)\right)  \\
 &  &  \hspace{-1cm}   + \frac{1}{4k} \sum_{n=0}^{j/2-1} (\sin(\frac{2k}{j}((2n+2)\pi-\theta)) - \sin(\frac{2k}{j}((2n+1)\pi+\theta)).
 \end{eqnarray*}

The first summation on the right hand side is telescopic and  cancels out. The last summation, by using the trigonometric addition formula, may  be written as \\
%$$=\frac{1}{j}\displaystyle{\sum_{n=0}^{j/2-1}(\pi/2+\theta + \frac{j}{4k}(\sin(\frac{2k}{j}((2n+2)\pi-\theta)) - \sin(\frac{2k}{j}((2n+1)\pi+\theta))}=$$
%NOTA: $m$\emph{ non va bene come indice di sommatoria}
$$  \frac{1}{4k}\left(\sum_{n=1}^{j}{(-1)^n\sin(\frac{2k}{j}n\pi)}\right)\cos(\frac{2k}{j}\theta)-
\frac{1}{4k}\left(\sum_{n=1}^{j}{\cos(\frac{2k}{j}n\pi)}\right)\sin(\frac{2k}{j}\theta).$$

Owing  to (\ref{q}), and (\ref{p}), and  putting together the various contributions,  this concludes the proof in the case  $j$  even and  $r>r_0$.

If $j$ is even and  $r<-r_0$,  we replace $B^+_n(r)$ with $B^-_n(r)$ and  follow  the same procedure, to obtain
$$H_{j,k} (r) =\displaystyle{\frac{\pi}{4}+\frac{\theta}{2}-\frac{p(j,k)}{4k}\cos(\frac{2k}{j}\theta)-\frac{q(j,k)}{4k}\sin(\frac{2k}{j}\theta)} =\displaystyle{\frac{\pi}{4}+\frac{\theta}{2}
-\frac{q(j,k)}{4k}\sin(\frac{2k}{j}\theta)} .$$

Let us now consider the case when
 $j$ is odd.  In the simple case  $j=1$, the direct computation  of $H_{j,k}(r)$ is trivial, and yields
 $$ H_{1,k}(r) = \frac{\pi}{2}, \quad \text{ if } \, r>-r_0; \qquad H_{1,k}(r) = \theta (r) -  \frac{\sin(2k\theta (r)) }{2k}
 \quad \text{ if } \, r<-r_0,$$
 which coincides with the formula (\ref{Hjk}) (although its verification is somehow hidden).

 For $j$ odd, and $j\geq 3$, and $r>r_0$, we get, again by following the same procedure as above,
% the subintervals where $H((r \sin jx)+r_0)=1$ are $(j+1)/2$ and the others are $(j-1)/2$.
\begin{eqnarray*}
H_{j,k}(r) & = & \frac{1}{j} \sum_{n=0}^{(j-3)/2} \int_{B^+_n(r)} \sin^2(\frac{k}{j}z) \, dz + \frac{1}{j}  \int_{(j-1)\pi}^{j\pi}   \sin^2(\frac{k}{j}z) \, dz   \\
 &  & \hspace{-3cm} = \left(1+\frac{1}{j}\right)\frac{\pi}{4} +\left(1-\frac{1}{j}\right) \frac{\theta}{2} + \frac{p(j,k)}{4k}\cos(\frac{2k}{j}\theta)-\frac{1}{4k}(q(j,k)-1)\sin(\frac{2k}{j}\theta);
 \end{eqnarray*}
while,  for $r < -r_0$,
\begin{eqnarray*}
H_{j,k}(r) & = & \frac{1}{j} \sum_{n=0}^{(j-1)/2} \int_{B^-_n(r)} \sin^2(\frac{k}{j}z) \, dz - \frac{1}{j}  \int_{j\pi}^{(j+1)\pi}   \sin^2(\frac{k}{j}z) \, dz   \\
 &  & \hspace{-3cm} = \left(1- \frac{1}{j}\right)\frac{\pi}{4} +\left(1+ \frac{1}{j}\right) \frac{\theta}{2} - \frac{p(j,k)}{4k}\cos(\frac{2k}{j}\theta)-\frac{1}{4k}(q(j,k)+1)\sin(\frac{2k}{j}\theta).
 \end{eqnarray*}

Again owing to  (\ref{q}), and (\ref{p}), this concludes the proof of the lemma.
\end{proof}

If $f(r)$ is the MMK  function in  (\ref{slack}), where $r_0>0$, then $g_{j,k}(r)=\frac{2m}{\pi} H_{j,k}(r)$. We can verify by direct inspection in (\ref{Hjk}) that this is a continuous function, as expected, because Proposition \ref{psi} says that  $\frac{\partial \psi_2}{\partial z} (y,0)=2 g_{j,k}(y)$ must be continuous.
If we set instead $r_0=0$,  (\ref{Hjk})  gives us the explicit formula for  $s_{j,k}(r)=\frac{2M}{\pi} H_{j,k}(r)$, that is needed for defining the limit system in Theorem \ref{main}.
This function is not continuous in $r=0$ if $j$ is odd.

This makes clear again that, if we weaken the condition b) in the assumption  ($\bf S_0$), substituting the MMK function with the similar one $f(r)=Mr^+$, the respective functions
$ \psi_1$, $ \psi_2$ in (\ref{ODE}) are no more smooth.

 \section{The discriminant for the  approximation (\ref{Moore1}) }
\label{conti Moore base}

In this section we provide the computation of   the instability discriminant $\Delta$ for each fixed value of $\beta$ and $q$ for the Hill equation (\ref{zjjbase}).
We  find the  explicit  solution $u(t;q)$ of the equation (\ref{eqqbase}), and its period $\tau = \tau (q) $.  The periodic coefficient in  (\ref{zjjbase})
is affected only by the slope of the piecewise linear function $ \bar f_j(r)$ whose values change at some transition points of $u(t;q)$.
It turns out that, at any fixed value of $q$,   (\ref{zjjbase})  is a multi-step Hill equation or Meissner equation \cite{BES,MW}.

We recall how to compute the discriminant of a Hill equation with a positive, multi-step potential.  Let the interval $[0,\tau] = \cup_{i=0}^n I_i$ be the union of disjoint
intervals, each having length $\,\Delta t_i$. Let us consider a   potential $Q(t)$ which is    $\tau$-periodic  positive and constant   on each subinterval $I_i$, that is,
$$ Q(t) = \sum_{i=0}^n A_i^2 \, \mathbbm{1}_{I_i}(t)  \qquad 0\leq t \leq \tau.$$

The monodromy matrix, and the discriminant  of the Hill equation  $ \, \ddot{v}(t) + Q(t) v(t)=0 $,  are computed  as follows, see  \cite{BES} p. 12. If
the $L_i$'s are the transition matrices
$$
L_i=\begin{bmatrix}\cos(A_i \Delta t_i) & \frac{1}{A_i} \sin(A_i \Delta t_i)\\ -A_i \sin(A_i \Delta t_i) & \cos(A_i\Delta t_i) \end{bmatrix},\quad i=0,1, ... n,
 $$
then we get
%\begin{equation}
%\label{M}
$$
 M = L_n\,L_{n-1}  ... L_1 \, L_0, \qquad \Delta ={\rm tr} M.
 $$
% \end{equation}

To simplify some calculations, it is important to note that the discriminant is invariant with respect to any cyclic permutation of the transition matrices, so for instance
$M=L_2\,L_1\,L_0$, and $M'= L_1\,L_0 \, L_2$ are in general different matrices having the same trace. In fact, they correspond to  different translations in time of  $Q(t)$, which of course leave  the discriminant unchanged.

In applying the previous formulas to the   equation  (\ref{zjjbase}), the $A^2_i$'s coefficients are known, being determined by the constant slopes of the function    $\bar f_j(r)$. It remains to compute the length of the  intervals $I_i$, and their ordering, modulo cyclic permutations.

First of all, we observe that, in both cases $j$ even or odd, we have
$$\,  \bar f_j(r) =mr,  \quad \text{ if }   \, | r | \leq \bar r:=\frac{4}{\pi}r_0, $$
therefore when  the initial value $\, u(0) = q $   is less or equal to $\bar r$, the  equation (\ref{eqqbase}) is linear, with solution
  $\, u(t;q)= q \cos(\omega t)$, $\omega = \sqrt{\alpha j^4 +2m}$. It follows that $\bar f_j'$ equals to $m$, and   the Hill equation (\ref{zjjbase}) reduces to a linear oscillator with constant angular frequency
$A=\sqrt{\beta j^2+ 2\gamma m}$. Thus it is stable and   $\Delta$ is simply the trace of the matrix $L$ with $\Delta   t=2 \pi/\omega$, \emph{ i.e. } $\Delta = 2 \cos(2\pi A/\omega)$.

In the case when $q> \bar r$, we must identify the intervals $I_i$  at whose end points $u(t;q) = \pm \bar r$.
We start  when  $j$ is even. Owing to (\ref{ftildaj}}) we have in the equation (\ref{eqqbase}),
$$  \bar f_j(r) =\frac{1}{2}( mr+m\bar r),\quad  \text{ if } \quad r>\bar r,\quad \bar f_j(r) =\frac{1}{2}( mr-m\bar r),\quad  \text{ if } \quad r<-\bar r,$$
therefore the  two angular  frequencies  of the Hill equation (\ref{zjjbase}) are
%\begin{equation}
%\label{Apari}
$$
A_0=\sqrt{\beta j^2+\gamma m}, \quad  \text{ if } \quad |u| \geq  \bar r, \qquad  A_1=\sqrt{\beta j^2+2\gamma m},  \quad  \text{ if } \quad |u| < \bar r.
$$
%\end{equation}

To determine the intervals in which the two cases occur, we note that $\bar f_j(r)$ is  an odd function, so that the potential for the equation (\ref{eqqbase}) is even.
 Therefore it is enough to compute $u(t;q)$ for  a quarter of a period, starting from $t=0$ to the first positive time $\, t_1 = t_1(q) $ when $\, u(t_1;q) = 0$, so that
 $\tau = 4 t_1$.

If we call $t_0= t_0(q) < t_1(q)$ the first positive time such that $u(t_0;q) = \bar r $,
then  the cycle $[-t_0, \tau-t_0]$ is the union of disjoint intervals $\cup_{i=0}^3 I_i$, such that
$$ |u| \geq \bar r, \quad t\in I_0\cup I_2; \qquad  |u| \leq  \bar r \quad t\in I_1\cup I_3. $$

Their  length are $\Delta t_i= 2t_0$,  for     $i =0,2$, and    $\Delta t_i=  2(t_1-t_0)$,  for     $i =1,3$. As a consequence, the potential of the equation (\ref{zjjbase}) is given by
%\begin{equation}
%\label{Q}
$$
 Q(t) = A^2_0 \mathbbm{1}_{I_0}(t) + A^2_1 \mathbbm{1}_{I_1}(t) + A^2_0 \mathbbm{1}_{I_2}(t) + A^2_1 \mathbbm{1}_{I_3}(t), \qquad t\in [-t_0, \tau -t_0],
 $$
% \end{equation}
and we have 4 transition matrices with $L_0 = L_2$, $L_1 = L_3$, so that
$$ M=(L_1 \, L_0)^2, \qquad \Delta(q)  ={\rm tr} M. $$

As for the computation of $t_0$, $t_1$, we define the 2 angular frequencies of the equation (\ref{eqqbase}),
$$ \omega_0=\sqrt{\alpha j^4 +m} , \qquad \omega_1 = \sqrt{\alpha j^4 +2m}, $$
so that  $u$ satisfies the equation $\, \ddot{u}+ \omega_0^2 u=-m\bar r\, $ on $I_0$, and $\ddot{u}+\omega_1^2 u=0$ on $I_1$.
On the first interval  $I_0 =  [-t_0, t_0[$, we obtain
\begin{equation*}
 u(t;q)=\left(q+\frac{m\bar r}{\omega_0^2} \right)  \cos\omega_0 t - \frac{m\bar r}{\omega_0^2}.
\end{equation*}

By solving the equation $\, u(t;q) = \bar r$, we get
%\begin{equation}
%\label{t0pari}
$$
t_0 = \frac{1}{\omega_0} {\rm acos}\,\left( \frac{\omega_0^2 \bar r+m\bar r}{\omega_0^2 q+m\bar r}\right).
$$
%\end{equation}

On the  second  interval $I_1$, $u$  solves  the equation  $\ddot{u}+\omega_1^2 u=0$, with
initial data $u(t_0;q)$ and $\dot{u}(t_0;q)$. Thus
$$u(t;q)=\bar r \cos(\omega_1 (t-t_0)) + \dot{u}(t_0;q)  \sin (\omega_1 (t-t_0))/\omega_1. $$

Since we have
$$ \dot{u}(t_0;q)= -\omega_0(q+\frac{m\bar r}{\omega_0^2})\sqrt{1-\cos^2 \omega_0 t_0}=-\sqrt{(q-\bar r)[\omega_0^2(q+\bar r)+2m\bar r]},$$
by solving the equation $\dot{u}(t;q)=0$, we get
%\begin{equation}
%\label{t1pari}
$$
t_1- t_0 = \frac{1}{\omega_1} {\rm atan}\,\left( \frac{\omega_1 \bar r}{B(q)}\right), \qquad B(q) =\sqrt{(q-\bar r)[\omega_0^2(q+\bar r)+2m\bar r]}.
$$
%\end{equation}

Now we come to the calculations in the case when  $j$ is odd. The function $\bar f_j(r)$    in (\ref{eqqbase}), for $|r|>\bar r$,   is given by
\begin{align*}
%\label{fdispari}
 &\bar f_j(r) =\frac{m}{2}(1+ 1/j)r+ \frac{m}{2}(1-i/j)\bar r,\quad r>\bar r, \\
 &\bar f_j(r) =\frac{m}{2}(1-1/j)r-\frac{m}{2} (1+1/j)\bar r,\quad r<-\bar r.
\end{align*}

The procedure is similar to the previous case, only a bit longer,  because  $\bar f_j(r)$ is no more odd, so that
  we need  to compute $u(t;q)$ on  a    half period.  We define $t_0$ the first positive time such that $u(t_0;q)= \bar r$,
and again we consider the cycle
$[-t_0, \tau-t_0]$ which is the union $\cup_{i=0}^3I_i$ of disjoint intervals,  such that
$$ u\geq \bar r, \quad t\in I_0; \quad |u| \leq \bar r, \quad t\in I_1\cup I_3; \quad  u \leq -\bar r, \quad t\in I_2.  $$

The Hill equation (\ref{zjjbase}) is now a four steps Meissner equation with potential
$$
 Q(t) = A^2_0 \mathbbm{1}_{I_0}(t) + A^2_1 \mathbbm{1}_{I_1}(t) + A^2_2 \mathbbm{1}_{I_2}(t) + A^2_1 \mathbbm{1}_{I_3}(t), \qquad t\in [-t_0, \tau -t_0],
$$
where
$$ A_0=\sqrt{\beta j^2+\gamma m(1+1/j)},\,\, A_1=A_3=\sqrt{\beta j^2+2\gamma m},\,\,  A_2=\sqrt{\beta j^2+\gamma m(1-1/j)},
$$
then  the  matrix $M$ becomes
$ M=L_1\,L_2\,L_1 \, L_0$.

 We define the 3 angular frequencies for the equation  (\ref{eqqbase}),
$$
\omega_0=\sqrt{\alpha j^4 +m(1+1/j)},\quad \omega_1=\sqrt{\alpha j^4 +2m},\quad \omega_2=\sqrt{\alpha j^4 +m(1-1/j)},
$$
and the constants
$$
D= m(1-1/j) \bar r,\quad E= m(1+1/j) \bar r,
$$
so that  $u$ satisfies the equation  $\ddot{u}+ \omega_0^2 u=-D$ on  $I_0$, the equation $\ddot{u}+  \omega_1^2 u=0$  on  $I_1\cup I_3$,  and   $\ddot{u}+ \omega_2^2 u=E$
on $I_2$.

Proceeding as above, for $t\in I_0$ we have,
\begin{equation*}
 u(t;q)=\left(q+\frac{D}{\omega_0^2} \right)  \cos\omega_0 t - \frac{D}{\omega_0^2},
\end{equation*}
thus, by finding $t_0$, we obtain half the length of $I_0$, that is
$$ \frac{\Delta t_0}{2} =  \frac{1}{\omega_0} {\rm acos}\,\left( \frac{\omega_0^2 \bar r+D}{\omega_0^2 q+D}\right).
$$

For $t\in I_1$, starting with the initial conditions at time $t_0$, we get the expression
\begin{equation*}
u(t;q)=\bar r \cos(\omega_1 (t-t_0))-\frac{B}{\omega_1} \sin (\omega_1 (t-t_0)), \quad B = -u(t_0;q)= \sqrt{(q-\bar r)[\omega_0^2(q+\bar r)+2D]}.
\end{equation*}

Thus, by finding the first time greater than $t_0$ such that  $u(t;q)=0$, we obtain half the length of $I_1$:
%\begin{equation}
%\label{t1dispari}
$$
\frac{\Delta t_1}{2} =   \frac{1}{\omega_1} {\rm atan}\,\left( \frac{\omega_1 \bar r}{B}\right).
$$
%\end{equation}

Finally, for $t\in I_2$,  we need  to solve the  third equation  $\ddot{u}+ \omega_2^2 u=E$ with  initial  data  at $t^* = t_0 + \Delta t_1$, that is
  $u(t^*;q)= -\bar r$,  $\dot{u}(t^*;q)=\dot{u}(t_0;q)=-B$.
Therefore, we have
\begin{equation*}
u(t;q)=-\left(\bar r+\frac{E}{\omega_2^2}\right)\cos \omega_2(t-t^*) -\frac{B}{\omega_2}\sin \omega_2 (t-t^*)+\frac{E}{\omega_2^2}.
\end{equation*}

By finding the first time greater than $t^*$ such that  $\dot{u}(t;q)=0$, we obtain  half the length of $I_2$:
$$
\frac{\Delta t_2}{2} =    \frac{1}{\omega_2} {\rm atan}\,\left( \frac{\omega_2B}{\omega_2^2 \bar r+E}\right).
$$

%%%%%%%%%%%%%%%%%

\end{document}